\documentclass[reqno]{amsart}

\usepackage{graphicx} 
\usepackage{enumitem}
\usepackage{amsmath,amsfonts,amssymb,amsthm,amscd,latexsym,cite}
\usepackage{mathrsfs}
\usepackage{color}
\newtheorem{thm}{Theorem}[section]
\newtheorem{theorem}[thm]{Theorem}

\newtheorem{lemma}[thm]{Lemma}

\newtheorem{proposition}[thm]{Proposition}

\newtheorem{problem}{Problem}

\newcommand{\cB}{{\mathcal B}}

\newcommand{\cH}{{\mathcal H}}

\newcommand{\cM}{{\mathcal M}}
\newcommand{\cN}{{\mathcal N}}

\newcommand{\cU}{{\mathcal U}}

 \usepackage{color}
  
 \newcommand{\norm}[1]{\left\lVert#1\right\rVert}
\usepackage{hyperref}					
\hypersetup{colorlinks,
	linkcolor=blue,%
	citecolor=blue}
\title{Tingley's Problem for Schatten \(p\)-Classes, $0<p\ne 2<\infty$}
\author[Jinghao Huang]{J. Huang}\thanks{
The authors were supported by the NNSF of China (12031004, 12301160 and 12471134).}
\address{Institute for Advanced Study in Mathematics of HIT, Harbin Institute of Technology, Harbin, 150001, China}
\email{{\color{blue}jinghao.huang@hit.edu.cn}}

\author[Yunpeng Zhu]{Y. Zhu} 
\address{Institute for Advanced Study in Mathematics of HIT, Harbin Institute of Technology, Harbin, 150001, China}
\email{{\color{blue}24s012030@stu.hit.edu.cn}}

\begin{document}
\begin{abstract} 
We give an affirmative answer to  Tingley's problem for Schatten--von Neumann classes $C_p$ by a unified approach  for all $0<p\ne 2<\infty$, extending   results due to Fern\'andez-Polo et al.
\end{abstract}

	\subjclass[2020]{47B49; 46A16; 46B04; 46B20}

	\keywords{Tingley's problem; Schatten--von Neumann class; isometry}

\maketitle

\section{Introduction}
\subsection{Background}
In 1987,  
Tingley posed the following problem\cite{Tingley}, which has since become known as Tingley's problem \cite{Ding1,Ding2009}.
\begin{problem}
    Let \(X\) and \(Y\) be real normed spaces, and let \(V_0\) be a surjective isometry between the unit spheres of \(X\) and \(Y\) (i.e., $\norm{V_0(x) -V_0(y)}_Y =\norm{x-y}_X$). Does there exist a surjective real linear isometry \(T: X \to Y\) extending \(V_0\), that is, \(T(x) = V_0(x)\) for all \(x\) in the unit sphere of \(X\)?
\end{problem}
Since then, Tingley's problem has attracted considerable attention and remains
open in full generality. Nevertheless, positive answers have been obtained for
many important classes of spaces. In the commutative setting, the problem has
been studied for classical Banach spaces such as \(\ell_p\) and \(L_p\)-spaces, $1\le p\ne 2\le \infty$ (see
\cite{Ding1,Ding2,Ding3,Tan1,Tan2,Tan3}), while the quasi-Banach case when $0<p<1$ was treated in \cite{Tan2,Li}. 
 In fact, \(L_p\)-spaces satisfy an even
stronger extension property, usually called the Mazur--Ulam property\cite{CD11}: every
surjective isometry from the unit sphere of an \(L_p\)-space onto the unit
sphere of an arbitrary real Banach space extends to a surjective real linear
isometry between the whole spaces.

\subsection{Tingley's property for noncommutative $L_p$-spaces}
Tanaka~\cite{Tanaka},    Fern\'andez-Polo,   Peralta \cite{Polo3,Polo18}, and   Ozawa and   Mori \cite{Mori,MO20} proved that Tingley's problem has an affirmative answer for all unital $C^*$-algebras (see \cite{LNW,WN22} and \cite{BCFP,Polo18b,KP} for recent progress concerning Tingley's problem for $C^*$-algebras and JBW$^*$-triples, respectively).

In 2018, Mori asked the following question. 
\begin{problem}\cite[Problem 6.3]{Mori}
    Let $1 < p\ne 2 < \infty$, and let $\cM, \cN$ be von
Neumann algebras and $T : S(L_p(\cM)) \to S(L_p(\cN))$ be a surjective isometry between the unit spheres of
(Haagerup) noncommutative $L_p$-spaces (with respect to fixed normal semifinite faithful weights). Does \(T\)
admit an extension to a real linear surjective isometry $T : L_p(\cM) \to L_p(\cN)$?
\end{problem} 

The special case of 
the trace class   \(C_1(H)\) was treated in \cite{Polo2}, and the 
case of general noncommutative $L_1$-spaces was proved in \cite{Mori}. 
Tingley's problem for the Schatten \(p\)-classes \(C_p(H)\), 
$1<p\ne 2<\infty$, was settled in \cite{Polo} by an approach that is completely different from those used in \cite{Polo2,Mori}. 



In this paper, we provide a unified solution of Tingley's problem for Schatten
\(p\)-classes for all \(0<p\neq2<\infty\) by using Weyl's submajorization (i.e., logarithmic submajorization).

\begin{theorem}\label{main}
    Let \(H_1,H_2\) be complex Hilbert spaces, and let \(V_0: S(C_p(H_1)) \to S(C_p(H_2))\) be a surjective isometry, where \(0<p\neq 2<\infty\). Then there exists a surjective complex linear or conjugate linear isometry that extends \(V_0\). 
\end{theorem}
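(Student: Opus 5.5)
The plan is to show that $V_0$ preserves enough of the algebraic structure of $C_p$ (orthogonality of elements and rank-one projections) that a complex linear or conjugate linear isometry can be read off, using a known description of the surjective isometries of $C_p(H)$. Throughout I write $d(x,y)=\norm{x-y}_p^{\min(p,1)}$, so that $d$ is a genuine metric even when $0<p<1$.

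\textbf{Step 1: an intrinsic characterization of orthogonality.} For $x,y$ in the unit sphere, I will show that $x\perp y$ (that is, $x^*y=xy^*=0$) holds if and only if
\[
\norm{x+y}_p^p=\norm{x-y}_p^p=\norm{x}_p^p+\norm{y}_p^p=2 .
\]
Necessity is clear. For sufficiency I will use Weyl (logarithmic) submajorization of singular values together with the Clarkson--McCarthy type inequalities. For $p\ne 2$, the equalities
\[
\norm{x+y}_p^p+\norm{x-y}_p^p=2\left(\norm{x}_p^p+\norm{y}_p^p\right)
\]
(when $p<2$), or the reverse-direction extremal case (when $p>2$), force equality in the underlying majorization. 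This in turn forces $x^*x$ and $y^*y$, and likewise $xx^*$ and $yy^*$, to have orthogonal supports.

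The difficulty is that the statement involves $x+y$ rather than only sphere points. I therefore intend to use the metric version instead: $\norm{x-y}_p^p=2$, and, for the elements of $S(C_p)$ at distance $2^{1/p}$ from $x$, the "antipodal" set $\{z:\norm{z\pm x}\ldots\}$. I expect this to be the main obstacle, especially for $0<p<1$, where convexity fails. I will first try to prove that
\[
\norm{x-y}_p^p=\norm{x+y}_p^p=2
\]
implies orthogonality, using the characterization of equality in $\norm{a+b}_p^p\le \norm a_p^p+\norm b_p^p$ for $p<1$ (via Weyl submajorization) and in its reverse for $p>1$. I will then show that $V_0(-x)=-V_0(x)$, using the fact that $-x$ is the unique point at maximal distance from $x$ (strict convexity of the unit ball fails for $p\le1$, so this needs the equality analysis again). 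Together these show that $V_0$ preserves orthogonality in both directions.

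\textbf{Step 2: rank-one elements and the induced map.} Minimal elements, namely rank-one partial isometries $\xi\otimes\eta$, are characterized as sphere points whose orthogonal complement within the sphere is maximal; equivalently, $x$ has rank one iff there is no decomposition $x=x_1+x_2$ with $x_1\perp x_2$ both nonzero. This property is metrically determined through Step 1, so $V_0$ maps rank-one elements to rank-one elements. On these, $V_0$ induces a bijection of the projective spaces of $H_1$ and $H_2$ (in both the left and right factors) that preserves orthogonality, with $\dim H\ge 2$ in the interesting case. Because $V_0$ is an isometry on rank-one sphere elements, it also preserves transition probabilities, read off from
\[
\norm{\xi\otimes\eta-\xi'\otimes\eta'}_p .
\]
Wigner's theorem (or Uhlhorn's theorem when $\dim\ge3$) then supplies unitaries or antiunitaries $U,W$ with $V_0(\xi\otimes\eta)=\lambda\, U\xi\otimes W\eta$ or the transposed form, and a phase analysis shows that the scalar $\lambda$ can be absorbed.

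\textbf{Step 3: extension to all of the sphere.} Define $T(x)=UxW^*$, or its transpose/conjugate variant; this is a complex linear or conjugate linear surjective isometry of $C_p$. Replacing $V_0$ by $T^{-1}\circ V_0$, I may assume that $V_0$ fixes every rank-one element of the sphere. A finite-rank sphere element $x=\sum s_i\,\xi_i\otimes\eta_i$ is determined by its distances to all rank-one sphere elements; this follows from an explicit computation of $\norm{x-t\,\xi\otimes\eta}_p$. Hence $V_0(x)=x$ for every finite-rank $x$. Density of finite-rank elements and continuity of $V_0$ then give $V_0=\mathrm{id}$ on the whole sphere, which yields the required extension.
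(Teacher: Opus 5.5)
Your outline has the same architecture as the paper: orthogonality preservation, $V_0(\mathcal U_{\min}(H_1))=\mathcal U_{\min}(H_2)$, a Wigner-type theorem, normalization to the identity on $\mathcal U_{\min}$, determination of finite-rank elements by their distances to $\mathcal U_{\min}$, and density. However, the three load-bearing steps are either wrong or only asserted.

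\textbf{Step 1.} The route to $V_0(-x)=-V_0(x)$ breaks down exactly in the new range. For $1<p<\infty$, strict convexity does make $-x$ the unique sphere point at distance $2$ from $x$. For $0<p<1$, however, the $p$-triangle inequality gives $\|x-y\|_p^p\le 2$ on the sphere, with the value $2$ attained at every $y\perp x$, whereas $\|x-(-x)\|_p^p=2^p<2$. For $p=1$, every $y\perp x$ is also at distance $2$, so $-x$ is not unique. For $p<1$ antipodality is in fact unnecessary, because $\|x-y\|_p^p=2=\|x\|_p^p+\|-y\|_p^p$ by itself forces $x\perp y$. But this equality case for noncommuting operators is a real lemma (Lemma \ref{orth for p<1}, proved via the $2\times 2$ self-adjoint dilation and a positive/negative-part argument), and you only gesture at it. The case $p=1$ needs a different argument altogether, which the paper imports from the trace-class case.

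\textbf{Step 2.} Transition probabilities cannot be read off from a single distance. For $e=\xi\otimes\eta$ and $v=\xi'\otimes\eta'$ one has $\|e-v\|_p^p=\Phi_D(2-2\operatorname{Re}z)$, with $\Phi_D$ as in Lemma \ref{lem:Phi-monotone}, $z={\rm Tr}(e^*v)=\langle\xi',\xi\rangle\langle\eta,\eta'\rangle$, and $D=(1-|\langle\xi,\xi'\rangle|^2)(1-|\langle\eta,\eta'\rangle|^2)$. So phases and moduli are entangled. You also have not justified that $V_0$ induces separate maps on the left and right projective spaces. The "phase analysis" you defer is the actual content. The paper first shows that either $V_0(\lambda e)=\lambda V_0(e)$ for all $e\in\mathcal U_{\min}$ and $\lambda\in\mathbb T$, or $V_0(\lambda e)=\bar\lambda V_0(e)$ for all of them (Proposition \ref{U_min preserving}(iv), a connectedness argument). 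It then recovers the complex number ${\rm Tr}(e^*v)$ from the whole function $\lambda\mapsto\|\lambda e-v\|_p$ (Lemma \ref{lem:trace-recovery}). Only then does it apply Moln\'ar's theorem, which also rules out a unitary paired with an antiunitary; for such a pair $UxW^*$ is not even a linear operator.

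\textbf{Step 3.} There is no usable closed formula for $\|x-\xi\otimes\eta\|_p$ when $\operatorname{rank}x\ge 2$ and $p\ne 2$. Moreover, only $|t|=1$ is available, since $V_0$ is defined only on the sphere. That $x$ is determined by $e\mapsto\|x-e\|_p$ on $\mathcal U_{\min}$ is the technical core of the paper (Proposition \ref{rank-one partial isometry decide}). It requires:
\begin{enumerate}[label=(\alph*)]
\item identifying the minimizers over rank-one projections via Cauchy interlacing, together with an induction on dimension for positive $x$ (Proposition \ref{rank-one decide+});
\item a logarithmic-submajorization argument showing that $a>0$ if and only if $\|a-e\|_p<\|a-\lambda e\|_p$ for all rank-one projections $e$ and all $\lambda\in\mathbb T\setminus\{1\}$ (Lemma \ref{positive decide}); this is what forces the unknown element to be positive once $x$ has been rotated to $|x|$;
\item a matching of supports.
\end{enumerate}
Finally, you need $V_0(x)$ to have finite rank before any finite-dimensional statement applies. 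This follows from orthogonality preservation: each $e\in\mathcal U_{\min}$ with $e\perp x$ is fixed by the normalized map, hence $e\perp V_0(x)$.
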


It should be noted that 
our approach covers the quasi-Banach case when \(0<p<1\), while
the convexity used essentially in 
the arguments in \cite{Polo} is no
longer applicable. Instead, our
approach is based on a direct spectral analysis of finite-dimensional
matrices, together with (logarithmic) submajorization 
techniques. Below, we outline the proof of Theorem \ref{main}.

As in the proof of \cite{Polo}, we will need to prove that 
the isometry \(V_0\) preserves orthogonality (i.e., 
 \(
 xy^*=0\) and \(x^*y=0\)) and 
one of the following statements holds for every minimal partial isometry \(e\) (denoted by  \(e\in \cU_{\min}(H)\); for simplicity, we assume that $H=H_1=H_2$),  and every \(\lambda\in \mathbb{C}\) satisfies \(|\lambda|=1\) :
\[
   V_0(\lambda e)=\lambda V_0(e) \text{ or } V_0(\lambda e)=\bar{\lambda} V_0(e),
\]
(see Lemma \ref{orth-preserving} and Proposition \ref{U_min preserving} below).


A key ingredient in proving Theorem \ref{main} is a metric characterization of \({\rm Tr}(e^*v)\) for \(e,f\in \cU_{\min}(H)\), i.e., for
\(e,v,e',v'\in \mathcal U_{\min}(H)\),  
\begin{align}\label{e-v to tr(ev)}
   \|\lambda e-v\|_p=\|\lambda e'-v'\|_p,
   \quad \lambda\in\mathbb T \Rightarrow  {\rm Tr}(e^*v)={\rm Tr}((e')^*v'),
\end{align}
 (see Lemma \ref{lem:trace-recovery} below).
This is the main point at which our argument differs from \cite{Polo}: there, the trace identity is deduced after solving the
two-dimensional Tingley's problem for \(C_p\), where \(1<p\neq 2<\infty\). Here \eqref{e-v to tr(ev)} follows directly from a two-dimensional spectral computation.




Another important component of the proof of Theorem \ref{main}
is a characterization of
rank-one perturbations,
which extends \cite[Proposition 2.10]{Polo} to  the setting of 
\(0<p\ne 2 <\infty\). More precisely, we prove that, for a finite-dimensional complex Hilbert space \(H\), if
\(x,y\in S(C_p(H))\), where \(0<p\neq 2<\infty\), then
\begin{align}\label{rank-one perturbations}
    \|x-e\|_p=\|y-e\|_p
    \text{ for all } e\in \mathcal U_{\min}(H)\Rightarrow  x=y,
\end{align}
(see Proposition \ref{rank-one partial isometry decide} below). 
We first establish 
the above result for \(x,y\in S(C_p(H)_+)\),
\[
\|x-e\|_p=\|y-e\|_p
   \text{ for all rank-one projection~} e \Rightarrow  x=y,
\]
(see Proposition \ref{rank-one decide+} below), extending \cite[Lemma]{Nagy} via
the Rayleigh quotient theorem \cite[Theorem 4.2.2]{Horn} and Cauchy's
interlacing theorem \cite[Theorem 1]{Cauchy}.
We then combine a logarithmic submajorization inequality for positive
semidefinite matrices \cite[Theorem 2]{zhan} with submajorization arguments to derive a characterization of positive invertible matrices with respect to $\norm{\cdot}_p$
(see
Lemma \ref{positive decide}). 
Now, a standard argument involving 
 polar decomposition and unitary invariance of \(\norm{\cdot}_p\) yields \eqref{rank-one perturbations} for arbitrary elements of \(S(C_p(H))\). 

By \eqref{e-v to tr(ev)} and \eqref{rank-one perturbations},
arguing as in \cite{Polo} and combining with \cite[Theorem 2]{Molnar},
we can extend \(V_0\) to a surjective isometry on the whole space $C_p(H)$. 


    
In Section \ref{s:p}, we also consider Tingley's problem for the positive sphere of \(C_p(H)\), \(0<p<\infty\) (see Proposition \ref{positive sphere} below).

\section{Preliminaries}

In this section,
we recall the notions needed
in this paper. 
General information concerning   Schatten $p$-classes can be found in
\cite{Horn,Simon,Gohberg}.

Let \(H\) be a complex (not necessarily separable) Hilbert space with identity \(I\). For a compact operator \(x\in \mathcal{B}(H)\) (the algebra of all bounded linear operators on $H$),  
the singular values of \(x\) are defined as the eigenvalues of \(|x|\) (see \cite[p.~3]{Cauchy}), listed
in non-increasing order and repeated according to multiplicity:
\[
   s_1(x)\ge s_2(x)\ge \cdots \ge 0.
\]
For a sequence $x$, we denote by 
\(s(x)\)  the  non-increasing   rearrangement of $|x|$.

For \(0<p<\infty\), the Schatten \(p\)-class (see \cite[p.~3]{Cauchy}) is defined by 
\[
   C_p(H)=\left\{x\in \mathcal{B}(H): x \text{ is compact and }
   \sum_{j=1}^{\infty} s_j(x)^p<\infty \right\},
\]
and \(\left\|\cdot\right\|_p\) on \(C_p(H)\) is defined by 
\[
   \|x\|_p
   =
   \left(\sum_{j=1}^{\infty} s_j(x)^p\right)^{1/p}
   =
   \bigl({\rm Tr}(|x|^p)\bigr)^{1/p}, \quad x\in C_p(H),
\]
where ${\rm Tr}$ stands for the standard trace on $B(H)$.
When \(1\le p<\infty\), \(\left \|\cdot \right\|_p\) is a norm and \(C_p(H)\) is a Banach
space. 
In particular, \(C_1(H)\) is the trace class and \(C_2(H)\) is the
Hilbert--Schmidt class (see \cite{Simon}).
When \(0<p<1\), the same formula defines a quasi-norm instead of a
norm. 

An element \(e\in \mathcal{B}(H)\) is called a projection if \(e^2=e=e^*\) and
an element \(e\in \mathcal{B}(H)\) is called a partial isometry (see \cite[p.~18]{DPS}) if 
\(e^*e,ee^*\)
are projections.

For unit vectors \(\eta,\xi\in H\), the operator
\(\eta\otimes\xi\), defined by
\[
   (\eta\otimes\xi)(a)=\langle a,\xi\rangle\eta,
   \qquad a\in H,
\]
is a rank-one partial isometry (a minimal partial isometry). Conversely, every minimal partial isometry in
\(\mathcal{B}(H)\) can be written in this form. Moreover,
every element \(x\in C_p(H)\) admits a singular value decomposition of the form
\[
   x=\sum_{n=1}^{\infty}s_n(x)e_n,
\]
where \(\{s_n\}\subset \mathbb R^+\setminus \{0\}\) is the sequence of nonzero singular values of \(x\) and \(e_n\) are mutually orthonormal minimal partial isometries \cite[Theorem 1.4]{Simon}. 

Throughout this paper, \(P(H)\) denotes the set of all projections on \(H\), and
\(\operatorname{Proj}_1(H)\) denotes the set of all minimal, equivalently
rank-one projections on \(H\). 
Moreover, \(\mathcal U_{\min}(H)\) denotes the set of all
minimal partial isometries on \(H\), which are contained in \(C_p(H)\) for \(0<p<\infty\), and
\(
   \mathbb T=\{\lambda\in\mathbb C: |\lambda|=1\}
\)
denotes the unit circle of \(\mathbb C\).


For $x, y\in C_p(H)$, we say that $x$ is orthogonal to $y$, denoted by $x\perp y$, if  \[xy^*=0,\quad x^*y=0.\] Moreover, let \( \ell(x)\) and \( r(x)\) denote its left and
right support projections, respectively. 

\section{Proof of the main result}


The following lemma was proved in 
  \cite[Lemma 2.2]{Polo2} for \(p=1\) and in \cite[Lemma 2.3]{Polo} for 
    \(1<p\neq 2<\infty\). 

\begin{lemma}\label{orth-preserving}
    Let \(H_1,H_2\) be complex Hilbert spaces, and let \(V_0: S(C_p(H_1)) \to S(C_p(H_2))\) be a surjective isometry, where $0<p\ne 2<\infty$. For any $x,y\in S(C_p(H_1))$,  \(x\perp y\) if and only if \(V_0(x)\perp V_0(y)\). Moreover, \(\dim(H_1) = \dim(H_2)\).
\end{lemma}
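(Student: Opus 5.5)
We characterize orthogonality purely metrically on the unit sphere, so that any surjective isometry of spheres must preserve it. The candidate characterization, valid for $0<p\neq 2<\infty$ and $x,y\in S(C_p(H))$, is
\[
x\perp y \iff \|x+y\|_p=\|x-y\|_p=2^{1/p}.
\]
This alone is not intrinsic to the sphere, because $x+y=x-(-y)$ and we do not yet know that $V_0(-y)=-V_0(y)$. So first we show $V_0(-x)=-V_0(x)$. Points with $\|x-z\|_p$ maximal are antipodal only when $p\ge 1$; for $p<1$, $\sup_{z}\|x-z\|_p$ can exceed $2$. Instead we use a cleaner route: characterize orthogonality by distances among sphere points only.

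The direction ``$\Rightarrow$'' is immediate. If $x\perp y$, then $s(x\pm y)$ is the union of $s(x)$ and $s(y)$, so $\|x\pm y\|_p^p=\|x\|_p^p+\|y\|_p^p=2$.

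For ``$\Leftarrow$'' we use the Clarkson--McCarthy type inequalities, valid for all $0<p<\infty$ with equality characterizations:
\[
\|x+y\|_p^p+\|x-y\|_p^p \;\ge\; 2(\|x\|_p^p+\|y\|_p^p) \quad\text{for } p>2,
\]
with the reverse inequality for $p<2$. Equality for $p\neq 2$ forces $x^*y=0$ and $xy^*=0$; this is the McCarthy / Bhatia--Kittaneh equality result for $|x+y|^p+|x-y|^p$ in trace. The equality case needs care for $p<1$. The plan there is to reduce to the identity $|x+y|^2+|x-y|^2=2(|x|^2+|y|^2)$ and apply strict convexity or concavity of $t\mapsto t^{p/2}$ in trace form. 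By that argument, equality forces $|x+y|^2$ and $|x-y|^2$ to commute with equal spectral structure, and hence forces $x^*y+y^*x=0$ together with the orthogonality of supports. This equality analysis is the main technical obstacle.

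To make the criterion intrinsic we prove that $V_0(-x)=-V_0(x)$. Mankiewicz-type arguments fail for $p<1$. Instead we characterize $-x$ as the unique $z\in S$ with $\|x-z\|_p=2$ such that for every $y\in S$ with $\|y-x\|_p=\|y-z\|_p=2^{1/p}$ we also have the symmetric relations. More simply, we use the set
\[
\{x\}^{\perp}_S=\{y\in S:\|x-y\|_p=2^{1/p}=\|x+y\|_p\},
\]
and the observation that for finite-rank $x$ this set is large. Concretely, we first show that for $y\perp x$ the points $x,-x$ are exactly the sphere points at distance $2^{1/p}$ from every element of the set $\{y\in S: y\perp x\}$. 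Then an induction in the style of \cite{Polo} lets us transfer orthogonality through $V_0$ via the equivalence ``$\|x-y\|_p=\|x+y\|_p=2^{1/p}$''.

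Finally, $\dim H_1=\dim H_2$ follows because $\dim H$ equals the maximal cardinality of a family of mutually orthogonal elements of $S(C_p(H))$, namely rank-one partial isometries with orthogonal ranges and supports. Orthogonality preservation in both directions, applied to $V_0$ and $V_0^{-1}$, shows that $V_0$ maps such families bijectively.
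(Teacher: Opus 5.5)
Your argument routes orthogonality through the symmetric criterion $\|x+y\|_p=\|x-y\|_p=2^{1/p}$, which is usable only once you know $V_0(-x)=-V_0(x)$. That step is never established.

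\begin{itemize}
\item The sketch you give is circular. Recovering $-x$ from the set $\{y\in S: y\perp x\}$ and transporting it by $V_0$ presupposes that $V_0$ maps this set onto $\{w\in S: w\perp V_0(x)\}$, which is the statement being proved. Your set $\{x\}^{\perp}_S$ has the same defect, since it is defined through $\|x+y\|_p$.
\item The characterization is also false for every $p$. Any $z\in S$ with $\ell(z)\le\ell(x)$ and $r(z)\le r(x)$ (for instance every $\lambda x$ with $\lambda\in\mathbb T$) is orthogonal to each $y\in S$ with $y\perp x$. Hence it lies at distance $2^{1/p}$ from all of them, so these distances cannot isolate $\pm x$.
\item The appeal to ``an induction in the style of \cite{Polo}'' does not repair this, and the equality case of Clarkson--McCarthy for $p<1$ is left open as well.
\end{itemize}

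The missing idea, which is the paper's entire proof for $0<p<1$ (the range $p\ge 1$ is quoted from the literature), is that no antipodal information is needed there. You noticed that distances on the sphere can exceed $2$ when $p<1$. The point is that the maximal possible distance $2^{1/p}$ between points of $S$ is attained exactly at orthogonal pairs. Indeed, $\|a+b\|_p^p\le\|a\|_p^p+\|b\|_p^p$ with equality iff $a\perp b$ (Lemma~\ref{orth for p<1}). This is a one-sided equality statement, so it avoids the two-sided Clarkson analysis you flag as the main obstacle. Since also $x\perp y\iff x\perp(-y)$, for $x,y\in S(C_p(H))$ one gets $x\perp y\iff\|x-y\|_p^p=2$. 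This condition involves a single distance between sphere points, so $V_0$ and $V_0^{-1}$ preserve orthogonality at once.

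For $1<p\ne 2$ your scheme can be completed by strict convexity: $-x$ is the unique point of $S$ at distance $2$ from $x$. However, your remark that farthest points are antipodal for $p\ge 1$ fails at $p=1$. In $C_1$ every unit $z\perp x$ is also at distance $2$ from $x$, so neither shortcut is available there; this is why the paper relies on \cite[Lemma 2.2]{Polo2} for that case. Your ``$\Rightarrow$'' direction and your dimension count are fine once orthogonality preservation is in hand.
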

\begin{proof}
  It suffices to prove the case when \(0< p< 1\).  
  Let \(x,y\in S(C_p(H_1))\) satisfy \(x\perp y\). By Lemma \ref{orth for p<1}, we have \(\|x-y\|_p^p=\|x\|_p^p+\|y\|_p^p\). Since \(V_0\) is an isometry, we have \(\|V_0(x)-V_0(y)\|_p^p=\|V_0(x)\|_p^p+\|V_0(y)\|_p^p\). By Lemma \ref{orth for p<1} again, \(V_0(x)\perp V_0(y)\). The converse follows by applying the same argument to \(V_0^{-1}\).

    Since $V_0$ preserves orthogonality of minimal projections, and since the dimensions of \(H_1,H_2\) are precisely the cardinalities of \(Proj_1(H_1),Proj_1(H_2)\), respectively (see e.g. \cite[p.~4]{Polo}), 
    it follows that \( \dim(H_1) \le \dim(H_2)\).
    Similarly, we have \( \dim(H_2) \le \dim(H_1)\).
    Hence, \(\dim(H_1) = \dim(H_2)\). 
\end{proof}

The following proposition was proved in \cite[Propositions 2.4 and 2.7]{Polo} for \(1<p\neq 2<\infty\).
The case \(p=1\) follows from \cite[Lemma 2.2]{Polo2}.
\begin{proposition}\label{U_min preserving}
    Let \(H_1,H_2\) be two complex Hilbert spaces, and let \(V_0: S(C_p(H_1)) \to S(C_p(H_2))\) be a surjective isometry, where \(0<p\neq 2<\infty\). Then the following
    statements hold:
    \begin{enumerate}[label=(\roman*)]
        \item \(V_0(\mathcal U_{\min}(H_1))=\mathcal U_{\min}(H_2).\)
        \item For each \(e\in \mathcal U_{\min}(H_1)\), we have \(V_0(\lambda e)=\lambda V_0(e)\) or \(V_0(\lambda e)=\overline{\lambda} V_0(e)\) for all \(\lambda \in \mathbb{T}\).
        \item For a fixed \(e_0\in \cU_{\min}(H_1)\), if \(V_0(\lambda_0 e_0)=\lambda_0 V_0(e_0)\)  (respectively \(V_0(\lambda_0 e_0)=\bar{\lambda}_0 V_0(e_0)\)) for some \(\lambda_0\in\mathbb T\setminus \{-1,1\}\),  then \(V_0(\lambda e_0)=\lambda V_0(e_0)\)  (respectively \(V_0(\lambda e_0)=\bar{\lambda} V_0(e_0)\)) for all \(\lambda\in\mathbb T\).
        \item If there exists \(e_0\in \mathcal U_{\min}(H_1)\) and \(\lambda_0 \in \mathbb T\setminus \{-1,1\}\) such that \(V_0(\lambda_0 e_0)=\lambda_0 V_0(e_0)\) (respectively \(V_0(\lambda_0 e_0)=\overline{\lambda_0} V_0(e_0)\)), then for arbitrary \(e\in \mathcal U_{\min}(H_1)\) and \(\lambda \in \mathbb{T}\), we have that \(V_0(\lambda e)=\lambda V_0(e)\) (respectively \(V_0(\lambda e)=\overline{\lambda} V_0(e)\)).
    \end{enumerate}
\end{proposition}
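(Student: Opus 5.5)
Since the cases $1<p\ne 2<\infty$ and $p=1$ are already covered by \cite[Propositions 2.4 and 2.7]{Polo} and \cite[Lemma 2.2]{Polo2}, I will only treat $0<p<1$. The argument follows the structure of \cite{Polo}, with orthogonality (Lemma \ref{orth-preserving}) and the $p$-additivity of Lemma \ref{orth for p<1} used in place of convexity.

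\textbf{Step (i).} I first characterize $\cU_{\min}(H)$ metrically inside the sphere. An element $x\in S(C_p(H))$ is a minimal partial isometry if and only if the set $\{x\}^{\perp}\cap S(C_p(H))$ is maximal among orthogonal complements of sphere elements. Equivalently, $x$ has rank one, since $\{y: y\perp x\}$ consists of the $y$ with $\ell(y)\le 1-\ell(x)$ and $r(y)\le 1-r(x)$. These sets shrink strictly as the rank of $x$ grows, so rank one is exactly the maximal case. A rank-one element of norm one is $s_1e$ with $s_1=1$, i.e. an element of $\cU_{\min}$.

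Lemma \ref{orth-preserving} says $V_0$ and $V_0^{-1}$ preserve orthogonality in both directions. Hence $V_0(\{x\}^\perp)=\{V_0(x)\}^\perp$, inclusions between these sets are preserved, and so maximality is preserved. This gives (i).

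\textbf{Step (ii).} Fix $e\in\cU_{\min}(H_1)$ and $\lambda\in\mathbb T$. The set $\{e\}^{\perp\perp}\cap S$ is exactly $\mathbb T e$, because the bicommutant-type orthogonal complement of $e=\eta\otimes\xi$ consists of the multiples of $e$. By the double-orthogonality argument above, $V_0(\mathbb T e)=\mathbb T V_0(e)$, so $V_0(\lambda e)=\mu(\lambda)V_0(e)$ for some $\mu:\mathbb T\to\mathbb T$.

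Isometry gives $|\mu(\lambda)-\mu(\lambda')|=|\lambda-\lambda'|$, since $\|(\lambda-\lambda')e\|_p=|\lambda-\lambda'|$. We also have $\mu(1)=1$. I would further show $\mu(-1)=-1$, so that $V_0(-x)=-V_0(x)$ on minimal elements; this is automatic from the distance $2$ being attained only at the antipode. A distance-preserving self-map of the circle fixing $1$ is either the identity or conjugation, which gives (ii).

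\textbf{Step (iii).} This is immediate from the dichotomy in (ii): if $\lambda_0\notin\{\pm1\}$, then $\lambda_0\ne\bar\lambda_0$, so $\mu(\lambda_0)$ determines which of the two maps $\mu$ is.

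\textbf{Step (iv).} This is the main obstacle: the choice (linear versus conjugate) must be the same for all $e$.

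First take $e\perp f$. Consider the elements $2^{-1/p}(\lambda e+f)$, which lie on the sphere. I would show that $V_0$ maps $2^{-1/p}(\alpha e+\beta f)$ to $2^{-1/p}(\alpha' V_0(e)+\beta' V_0(f))$. This uses that the elements orthogonal to all $g\perp e,f$ are determined by $V_0$, so the image lies in the corresponding $2\times2$ corner. Comparing the distances
\[
\|2^{-1/p}(e+f)-\lambda e\|_p
\]
computed in both spaces forces compatible conjugation behaviour for $e$ and $f$.

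For non-orthogonal $e,f$ in a space of dimension at least $3$, I would connect them through a chain of elements each orthogonal to the next, by choosing $g$ orthogonal to both. The case of low dimension (dimension $2$, where such $g$ may not exist) needs a direct two-dimensional computation. There I would compare $\|\lambda e-f\|_p$ with $\|\mu(\lambda)V_0(e)-\mu'(1)V_0(f)\|_p$ via explicit singular values, and expect that a mismatch fails when $p\ne2$. This step is the one I expect to be delicate.
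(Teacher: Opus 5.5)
Your Steps (i)--(iii) are sound. Characterizing $\cU_{\min}$ through orthogonal complements, and using $\{e\}^{\perp\perp}\cap S=\mathbb T e$ together with Lemma \ref{orth-preserving}, is a reasonable reconstruction of what the paper imports from Fern\'andez-Polo et al.

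The gap is in (iv), and it is not merely a delicate step: the mechanism you propose cannot detect a type mismatch at all.

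\textbf{The orthogonal case.} Take $e\perp f$. Your claim that $V_0$ sends $2^{-1/p}(\alpha e+\beta f)$ to $2^{-1/p}(\alpha' V_0(e)+\beta' V_0(f))$ does not follow from double orthogonality. The set $\{V_0(e),V_0(f)\}^{\perp\perp}$ is a full $2\times 2$ matrix corner, and it contains non-diagonal elements.

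Even granting that claim, it would not help. All distances among elements of $\mathbb C e+\mathbb C f$ (including $\mathbb T e$ and $\mathbb T f$) are $\ell_p^2$-distances. The map $\alpha e+\beta f\mapsto \alpha V_0(e)+\bar\beta V_0(f)$ preserves every one of them, including $\|2^{-1/p}(e+f)-\lambda e\|_p$ and $\|2^{-1/p}(e+f)-\lambda f\|_p$. So these comparisons are fully consistent with $e$ being of linear type and $f$ of conjugate type, just as coordinatewise conjugation is a real-linear isometry of $\ell_p$. Your chain argument through mutually orthogonal elements therefore has nothing to rest on.

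\textbf{The two-dimensional comparison.} This fails for a similar reason. In $\|\lambda e-f\|_p=\|\mu(\lambda)V_0(e)-\mu'(1)V_0(f)\|_p$, the type of $f$ enters only through $\mu'(1)=1$, which is the same for both types. Also, the hypothesis $p\neq 2$ plays no role in this part of the statement.

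\textbf{What is needed.} You must rotate both elements simultaneously, and use non-orthogonal elements. The paper does this with nearby elements. Since $V_0(-w)=-V_0(w)$, suppose $V_0(iv)=iV_0(v)$ and $V_0(iw)=-iV_0(w)$. Then
$\|v-w\|_p=\|V_0(iv)-V_0(iw)\|_p=\|V_0(v)+V_0(w)\|_p=\|v+w\|_p$.
The $p$-triangle inequality rules this out when $\|v-w\|_p$ is small. Hence both type classes are open, and connectedness of $\cU_{\min}(H_1)$ gives (iv).

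A direct repair in the spirit of your proposal also works:
\begin{enumerate}
\item If $e$ has linear type and $f$ has conjugate type, then $\|\lambda e-\lambda' f\|_p=\|\lambda V_0(e)-\bar\lambda' V_0(f)\|_p$ for all $\lambda,\lambda'\in\mathbb T$.
\item Taking $\lambda=\lambda'$ shows that $\nu\mapsto\|V_0(e)-\nu V_0(f)\|_p$ is constant on $\mathbb T$. Then taking $\lambda'=1$ shows that $\nu\mapsto\|e-\nu f\|_p$ is constant on $\mathbb T$.
\item By the formula $\|\lambda e-f\|_p^p=\Phi_D\bigl(2-2\operatorname{Re}(\bar\lambda\,{\rm Tr}(e^*f))\bigr)$ from Lemma \ref{lem:trace-recovery}, and the strict monotonicity in Lemma \ref{lem:Phi-monotone}, this forces ${\rm Tr}(e^*f)=0$.
\item Hence any $e,f$ with ${\rm Tr}(e^*f)\neq 0$ share the same type. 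Any two minimal partial isometries can be linked through a third $g$ with ${\rm Tr}(e^*g)\neq 0\neq {\rm Tr}(g^*f)$.
\end{enumerate}
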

\begin{proof}
It suffices to prove the case when \(0< p\le 1\).
The proofs of  (i)-(iii) are  the same as the proof of \cite[Proposition 2.4.(a)-(c)]{Polo}.

 Setting  \(\lambda=-1\) in Proposition \ref{U_min preserving} (ii), we have that 
\begin{align}\label{-e}
V_0(-e)=-V_0(e)
\end{align}
for all \(e\in \mathcal U_{\min}(H_1)\). Then, we have that
\begin{align}\label{e+f}
\left\|V_0(e)+V_0(f)\right\|_p^p
&=\left\|V_0(e)-(-V_0(f))\right\|_p^p
\stackrel{\eqref{-e}}{=}\left\|V_0(e)-(V_0(-f))\right\|_p^p
\\&=\left\|e-(-f)\right\|_p^p=\left\|e+f\right\|_p^p,\nonumber
\end{align}
for all \(e,f\in \cU_{\min}(H_1)\).

Define
\[
\mathfrak{D}_1
:=
\left\{
v\in \mathcal{U}_{\min}(H_1):
V_0(\lambda v)=\lambda V_0(v)
\text{ for all } \lambda\in \mathbb{T}
\right\}
\]
and
\[
\mathfrak{D}_2
:=
\left\{
v\in \mathcal{U}_{\min}(H_1):
V_0(\lambda v)=\overline{\lambda}V_0(v)
\text{ for all } \lambda\in \mathbb{T}
\right\}.
\]
By Proposition \ref{U_min preserving} (ii), we have that
\[
\mathcal{U}_{\min}(H_1)=\mathfrak{D}_1\dot{\cup}\mathfrak{D}_2 ,
\]
where \(\dot{\cup}\) stands for the disjoint union.
Since \(\cU_{\min}(H_1)\) is connected  \cite[Theorem 7]{Halmos}, it suffices to prove that \(\mathfrak{D}_1\) is open. For a fixed 
\(v\in \mathfrak{D}_1\), let \(w\in \mathcal{U}_{\min}(H_1)\) satisfy
\begin{align}\label{varepsilon}
\|v-w\|_p^p<2^{p-1}\varepsilon
\end{align}
for some \(0<\varepsilon<1\).
By Proposition \ref{U_min preserving} (ii), we have that
\(V_0(iw)=iV_0(w)\) or \(V_0(iw)=-iV_0(w)\).
Assume that 
\begin{align}\label{-iw}
   V_0(iw)=-iV_0(w).
\end{align}
Since \(v\in \mathfrak{D}_1\), we have that
\begin{align}\label{iv}
   V_0(iv)=iV_0(v).
\end{align}
Hence, we have that
\begin{align*}
\|v-w\|_p^p
&=
\|iv-iw\|_p^p 
=
\|V_0(iv)-V_0(iw)\|_p^p  
\stackrel{\eqref{-iw},\eqref{iv}}{=}
\|iV_0(v)+iV_0(w)\|_p^p  
\\&=
\|V_0(v)+V_0(w)\|_p^p  
\stackrel{\eqref{e+f}}{=}
\|v+w\|_p^p,
\end{align*}
which implies that
\begin{align}\label{v+w}
\|v+w\|_p^p=\|v-w\|_p^p\stackrel{\eqref{varepsilon}}{<}2^{p-1}\varepsilon
\end{align}
Using  \cite[Theorem 2.8]{McCarthy}, we have
\begin{align*}
1
=
\|v\|_p^p
=
\left\|
\frac{v-w}{2}
+
\frac{v+w}{2}
\right\|_p^p 
\le
\left\|
\frac{v-w}{2}
\right\|_p^p
+
\left\|
\frac{v+w}{2}
\right\|_p^p 
\stackrel{\eqref{v+w}}{<}
\varepsilon
<
1,
\end{align*}
which is impossible. Hence
\[
V_0(iw)=iV_0(w).
\]
By Proposition \ref{U_min preserving} (iii), it follows that
\[
w\in \mathfrak{D}_1 .
\]
Therefore, \(\mathfrak{D}_1\) is open. 
\end{proof}

\begin{lemma}\label{lem:Phi-monotone}
Let \(a\in\mathbb{R}^+\) and \(p>0\). Define \(\Phi_a:[2\sqrt{a},\infty)\to \mathbb{R}\) by
\[
\Phi_a(T):=
\left(\frac{T+\sqrt{T^2-4a}}{2}\right)^{p/2}
+
\left(\frac{T-\sqrt{T^2-4a}}{2}\right)^{p/2}.
\]
Then \(\Phi_a\) is strictly increasing on \([2\sqrt{a},\infty)\).
\end{lemma}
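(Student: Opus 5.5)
The plan is to reparametrize by the larger root, which turns \(\Phi_a\) into a composition of two strictly increasing functions. For \(T\ge 2\sqrt a\), the numbers
\[
\lambda_{1}=\tfrac{T+\sqrt{T^2-4a}}{2},\qquad \lambda_{2}=\tfrac{T-\sqrt{T^2-4a}}{2}
\]
are the two nonnegative roots of \(\lambda^2-T\lambda+a=0\). They satisfy \(\lambda_1+\lambda_2=T\), \(\lambda_1\lambda_2=a\) and \(\lambda_1\ge\sqrt a\ge\lambda_2\ge 0\). This is exactly the situation of the eigenvalues of a positive \(2\times 2\) matrix with trace \(T\) and determinant \(a\), and it is presumably how the lemma will be used later.

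First I dispose of the degenerate case \(a=0\). Then \(\lambda_2=0\) and \(\lambda_1=T\), so \(\Phi_0(T)=T^{p/2}\), using \(0^{p/2}=0\) since \(p>0\). This is strictly increasing on \([0,\infty)\).

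For \(a>0\), I use \(t:=\lambda_1\in[\sqrt a,\infty)\) as the parameter. Then \(\lambda_2=a/t\) and
\[
T=h(t):=t+\frac at .
\]
The map \(h\) is continuous and strictly increasing on \([\sqrt a,\infty)\), since \(h'(t)=1-a/t^2>0\) for \(t>\sqrt a\). It maps this interval bijectively onto \([2\sqrt a,\infty)\), and its inverse is precisely \(T\mapsto \lambda_1(T)\). Hence \(\Phi_a=g\circ h^{-1}\), where
\[
g(t)=t^{p/2}+\Bigl(\frac at\Bigr)^{p/2}=t^{p/2}+a^{p/2}t^{-p/2}.
\]

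It then suffices to show that \(g\) is strictly increasing on \([\sqrt a,\infty)\). Differentiating gives
\[
g'(t)=\frac p2\Bigl(t^{p/2-1}-a^{p/2}t^{-p/2-1}\Bigr)=\frac p2\, t^{-p/2-1}\bigl(t^{p}-a^{p/2}\bigr).
\]
This is strictly positive for \(t>\sqrt a\), because \(t^p>(\sqrt a)^p=a^{p/2}\). Since \(g\) is continuous at the endpoint \(\sqrt a\), it is strictly increasing on the closed interval. The composition of the strictly increasing maps \(h^{-1}\) and \(g\) is strictly increasing, which proves the lemma.

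There is no real obstacle here. The only points needing care are the endpoint \(T=2\sqrt a\), where \(\sqrt{T^2-4a}\) is not differentiable in \(T\), and the value \(a=0\). Working in the variable \(t\) rather than \(T\) avoids both issues.
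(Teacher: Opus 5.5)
Your proof is correct and is essentially the paper's argument. The paper also treats \(a=0\) separately. For \(a>0\) it parametrizes by the roots, writing \(T=2\sqrt a\cosh u\), so that the roots are \(\sqrt a\,e^{\pm u}\) and \(\Phi_a(T)=2a^{p/4}\cosh(pu/2)\); this is exactly your substitution \(t=\sqrt a\,e^{u}\). The only difference is that the paper reads off monotonicity from \(\cosh\) instead of differentiating \(g\).
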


\begin{proof}
Set
\[
r:=\frac{p}{2}>0.
\]

If \(a=0\), then for \(T\ge 0\), we have 
\[
\Phi_0(T)
=
\left(\frac{T+\sqrt{T^2}}{2}\right)^r
+
\left(\frac{T-\sqrt{T^2}}{2}\right)^r=
T^r.
\]
Since \(r>0\), the function \(T\mapsto T^r\) is strictly increasing on \([0,\infty)\). Therefore, \(\Phi_0\) is strictly increasing on \([0,\infty)\).

Now, assume that \(a>0\). 
Note that \(\cosh u\) is strictly increasing on \([0,\infty)\) and its range is \([1,\infty)\).
For each \(T\ge 2\sqrt{a}\), there exists a unique \(u\ge 0\) such that
\begin{align*}
   T=2\sqrt{a}\cosh u.
\end{align*}
  Then we obtain
\begin{align*}
   \sqrt{T^2-4a}
=
\sqrt{4a\cosh^2u-4a}
=
2\sqrt{a}\sinh u. 
\end{align*}
Hence,
\begin{align*}
\Phi_a(T)
=&
\left(\frac{2\sqrt{a}\cosh u+2\sqrt{a}\sinh u}{2}\right)^r+\left(\frac{2\sqrt{a}\cosh u-2\sqrt{a}\sinh u}{2}\right)^r
\\=&
(\sqrt{a}\,e^u)^r+(\sqrt{a}\,e^{-u})^r
=
a^{r/2}(e^{ru}+e^{-ru})
=
2a^{r/2}\cosh(ru).
\end{align*}

Since \(r>0\), the function \(u\mapsto \cosh(ru)\) is strictly increasing on \([0,\infty)\). Moreover, \(T=2\sqrt{a}\cosh u\) is also strictly increasing with respect to \(u\). It follows that \(\Phi_a(T)\) is strictly increasing for \(T\) in \([2\sqrt{a},\infty)\).
\end{proof}

The following lemma should be compared with \cite[Corollary 2.13]{Polo}. 
\begin{lemma}\label{lem:trace-recovery}
Let \(H\) be a complex Hilbert space, and let \(0<p<\infty\). Let
\(e,v,e',v'\in \mathcal U_{\min}(H)\). If
\begin{align}\label{e,e'}
   \|\lambda e-v\|_p=\|\lambda e'-v'\|_p,
   \qquad \lambda\in\mathbb T ,
\end{align}
then
\[
   {\rm Tr}(e^*v)={\rm Tr}((e')^*v').
\]
\end{lemma}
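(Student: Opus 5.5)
Write $e=\eta\otimes\xi$, $v=\eta'\otimes\xi'$ with unit vectors. The operator $x_\lambda:=\lambda e-v$ has rank at most two. Its nonzero singular values are the square roots of the two eigenvalues of $x_\lambda^*x_\lambda$ restricted to a space of dimension at most two. So I will compute the trace and determinant of that $2\times 2$ compression and reduce $\|x_\lambda\|_p^p$ to the function $\Phi_a$ of Lemma \ref{lem:Phi-monotone}. Concretely, set $T_\lambda:={\rm Tr}(x_\lambda^*x_\lambda)=\|x_\lambda\|_2^2$ and let $a$ be the product of the two (possibly zero) eigenvalues of $|x_\lambda|^2$. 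Then
\[
\|x_\lambda\|_p^p=\Bigl(\tfrac{T_\lambda+\sqrt{T_\lambda^2-4a}}{2}\Bigr)^{p/2}+\Bigl(\tfrac{T_\lambda-\sqrt{T_\lambda^2-4a}}{2}\Bigr)^{p/2}=\Phi_{a}(T_\lambda).
\]

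First I expand $T_\lambda=\|e\|_2^2+\|v\|_2^2-2{\rm Re}(\bar\lambda\,{\rm Tr}(e^*v))=2-2{\rm Re}(\bar\lambda\,{\rm Tr}(e^*v))$, where ${\rm Tr}(e^*v)=\langle \eta',\eta\rangle\langle\xi,\xi'\rangle$. Next I compute $a$, the squared modulus of the determinant of $x_\lambda$ viewed as a map between two-dimensional spaces. This is a Gram-type quantity, and the key point is that it is \emph{independent of $\lambda$}. One way to see it: $a=\tfrac12\bigl((\mathrm{Tr}|x_\lambda|^2)^2-\mathrm{Tr}|x_\lambda|^4\bigr)$, or directly $a=(1-|\langle\eta,\eta'\rangle|^2)(1-|\langle\xi,\xi'\rangle|^2)$. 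The latter holds because $x_\lambda$ maps ${\rm span}\{\xi,\xi'\}$ to ${\rm span}\{\eta,\eta'\}$, and its "determinant" in orthonormal bases factors as the product of the volume factors, since $\lambda$ only rescales one rank-one piece, giving a factor of modulus one. Degenerate cases, where $\eta\parallel\eta'$ or $\xi\parallel\xi'$, give $a=0$, consistent with the formula. I expect verifying this determinant identity cleanly, including degenerate cases, to be the main computational step.

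Then I pick $\lambda_0\in\mathbb T$ with $\bar\lambda_0{\rm Tr}(e^*v)=-|{\rm Tr}(e^*v)|$, so that $T_{\lambda}$ is maximal at $\lambda_0$ and minimal at $-\lambda_0$. Since $\Phi_a$ is strictly increasing, and is in particular injective for fixed $a$, the function $\lambda\mapsto\|\lambda e-v\|_p^p$ attains its maximum $\Phi_a(2+2|t|)$ and minimum $\Phi_a(2-2|t|)$, where $t={\rm Tr}(e^*v)$. The primed data give the same function of $\lambda$ with parameters $a',t'$. Comparing the two functions, I aim to deduce $a=a'$ and $|t|=|t'|$, and then $t=t'$ from the phase at which the extremum is attained.

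The obstacle is separating $a$ from $|t|$, since the functions $\Phi_a$ differ as $a$ varies. My approach is to use the whole family $\lambda\mapsto\Phi_a(2-2{\rm Re}(\bar\lambda t))$. The maxima and minima of the two functions occur at the same $\lambda$, because $\Phi_a,\Phi_{a'}$ are strictly increasing. Hence, when $t\neq0$, $\arg t=\arg t'$, and $t=0$ iff $t'=0$, which holds iff the function is constant. Moreover, ${\rm Re}(\bar\lambda t)$ and ${\rm Re}(\bar\lambda t')$ have the same level sets. Write $t=|t|\omega$, $t'=|t'|\omega$ and $s={\rm Re}(\bar\lambda\omega)\in[-1,1]$. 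We then need $\Phi_a(2-2|t|s)=\Phi_{a'}(2-2|t'|s)$ for all $s$. Also $a=(1-|\alpha|^2)(1-|\beta|^2)$ with $|t|=|\alpha||\beta|$, which constrains $a$ via $T^2-4a\ge0$. I would evaluate at $s=0$, where $T=2$, to get $\Phi_a(2)=\Phi_{a'}(2)$. The function $a\mapsto\Phi_a(2)=2a^{p/4}\cosh(\tfrac p2\operatorname{arccosh}(1/\sqrt a))$ is monotone in $a$, since $\Phi_a(2)^2-$ type expansions show $(x^{p/2}+y^{p/2})$ with $x+y=2$, $xy=a$ is strictly monotone in $a$ for $p\ne 4$ (hmm, this requires $p\neq 2$ in general, and may fail for $p=2$). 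So I would instead use derivatives in $s$ at $s=0$, or the two extreme values, to solve for $(a,|t|)$, checking that the map $(a,|t|)\mapsto(\Phi_a(2+2|t|),\Phi_a(2-2|t|))$ is injective. If this is delicate, I would use analyticity in $s$ and compare Taylor coefficients. Either way, this injectivity is the step to be settled carefully; once it holds, $t=t'$ follows.
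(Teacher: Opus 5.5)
\textbf{Gap: the final step is not proved.} Your reduction is the same as the paper's. You have $\|\lambda e-v\|_p^p=\Phi_a(T_\lambda)$ with $T_\lambda=2-2\operatorname{Re}(\bar\lambda t)$, and $a=(1-|\langle\eta,\eta'\rangle|^2)(1-|\langle\xi,\xi'\rangle|^2)$ is independent of $\lambda$. This $a$ is the paper's $D=|b|^2|d|^2$, which the paper gets from $\det(\lambda e-v)=-\lambda b\bar d$. Your volume-factor heuristic can be made rigorous: write $\lambda e-v=A\,\mathrm{diag}(\lambda,-1)\,B^*$ with $A=[\eta\ \eta']$ and $B=[\xi\ \xi']$. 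Then the product of the two top eigenvalues of $|\lambda e-v|^2$ equals $\det(A^*A)\det(B^*B)$, degenerate cases included. Matching $\arg t$, and the case $t=0$, through the unique minimiser is also exactly as in the paper.

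What is missing is the last step. You never show that the function $s\mapsto\Phi_a(2-2|t|s)$ determines $|t|$; you end by deferring an unproved injectivity claim. Your alternative of comparing the extreme values $(\Phi_a(2+2|t|),\Phi_a(2-2|t|))$ is not justified, and your stated aim of deducing $a=a'$ is false for $p=2$. For example, $t=0,\ a=0$ (take $\eta=\eta'$, $\xi\perp\xi'$) and $t=0,\ a=1$ (take $\eta\perp\eta'$, $\xi\perp\xi'$) both give $\|\lambda e-v\|_2^2\equiv 2$. So no argument that recovers $a$ can work uniformly in $p$.

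\textbf{How to close it.} The missing argument is your own first idea with the exceptional exponent corrected. At $s=0$, i.e.\ $T=2$, you have $\Phi_a(2)=(1+u)^{r}+(1-u)^{r}$ with $u=\sqrt{1-a}\in[0,1]$ and $r=p/2$. Its $u$-derivative $r\bigl((1+u)^{r-1}-(1-u)^{r-1}\bigr)$ has constant nonzero sign on $(0,1)$ whenever $r\neq 1$. So $a\mapsto\Phi_a(2)$ is strictly monotone for every $p\neq 2$. The exception is $p=2$, not $p=4$: for $p=4$ this equals $2+2u^2$.

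\begin{itemize}
\item For $p\neq 2$: $\Phi_a(2)=\Phi_{a'}(2)$ forces $a=a'$. Strict monotonicity of $\Phi_a$ at $s=1$ then gives $2-2|t|=2-2|t'|$.
\item For $p=2$: you do not need $a$ at all, since $\Phi_a(T)=T$. Hence $\|\lambda e-v\|_2^2=2-2\operatorname{Re}(\bar\lambda t)$, and $t$ is read off directly.
\end{itemize}

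With this case split, which is exactly how the paper finishes, your argument is complete. Without it, the key step of the lemma is missing.
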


\begin{proof}
Put
\begin{align}\label{z,z'}
   z={\rm Tr}(e^*v),
   \qquad
   z'={\rm Tr}((e')^*v').
\end{align}
{\bf{Step 1}}
We first compute the function
\[
   \lambda\mapsto \|\lambda e-v\|_p^p .
\]

Write
\[
   e=\eta\otimes \xi,\qquad v=\zeta\otimes \omega,
\]
where \(\eta,\xi,\zeta,\omega\) are unit vectors in \(H\). Choose orthonormal bases
\(\{\eta,\eta^\perp\}\) and \(\{\xi,\xi^\perp\}\) in the two-dimensional spaces
generated by \(\eta,\zeta\) and by \(\xi,\omega\), respectively. Write
\[
   \zeta=a\eta+b\eta^\perp,
   \qquad
   \omega=c\xi+d\xi^\perp ,
\]
with
\[
   |a|^2+|b|^2=1,
   \qquad
   |c|^2+|d|^2=1.
\]
With respect to these bases, \(e\) and \(v\) have the matrix forms
\[
   e=
   \begin{pmatrix}
      1&0\\
      0&0
   \end{pmatrix},
   \qquad
   v=
   \begin{pmatrix}
      a\bar c&a\bar d\\
      b\bar c&b\bar d
   \end{pmatrix}.
\]
Hence, 
\begin{align}\label{lambda e-v}
   \lambda e-v
   =
   \begin{pmatrix}
      \lambda-a\bar c&-a\bar d\\
      -b\bar c&-b\bar d
   \end{pmatrix}.
\end{align}
Let \(t_1(\lambda),t_2(\lambda)\) be the two eigenvalues of
\((\lambda e-v)^*(\lambda e-v)\). Let
\begin{align}\label{T(lambda)}
   T(\lambda)=t_1(\lambda)+t_2(\lambda)
   =
   {\rm Tr}\bigl((\lambda e-v)^*(\lambda e-v)\bigr).
\end{align}
Since
\[
   (\lambda e-v)^*(\lambda e-v)
   =
   e^*e+v^*v-\bar\lambda e^*v-\lambda v^*e,
\]
we have that
\begin{align}\label{T lambda}
   T(\lambda)
   \stackrel{\eqref{z,z'}}{=}
   2-2\operatorname{Re}(\bar\lambda z).
\end{align}
Let
\begin{align}\label{D(lambda)}
D(\lambda)=t_1(\lambda)t_2(\lambda),
\end{align}
Combining this with
\[
   \det(\lambda e-v)\stackrel{\eqref{lambda e-v}}{=}-\lambda b\bar d,
\]
we have that
\[
D(\lambda)=\det\bigl((\lambda e-v)^*(\lambda e-v)\bigr)
   =
   |\det(\lambda e-v)|^2=|b|^2|d|^2,
\]
which means that \(D(\lambda)\) is a constant which depends on \(e,v\) only. Let
\[
   D=|b|^2|d|^2.
\]
Hence,
\begin{eqnarray*}
    \|\lambda e-v\|_p^p
    &=&
    t_1(\lambda)^{p/2}+t_2(\lambda)^{p/2}
   \\& \stackrel{\eqref{T(lambda)},\eqref{D(lambda)}}{=}&
\left(\frac{T(\lambda)+\sqrt{T(\lambda)^2-4D}}{2}\right)^{p/2}
+
\left(\frac{T(\lambda)-\sqrt{T(\lambda)^2-4D}}{2}\right)^{p/2}
\nonumber
\\&= & \Phi_{D}(T(\lambda))\nonumber
\end{eqnarray*}
where \(\Phi\) is the function in Lemma \ref{lem:Phi-monotone}. Similarly,
\[
   \|\lambda e'-v'\|_p^p
   =
   \Phi_{D'}(T'(\lambda)),
\]
where
\[
   D'=
   |b'|^2|d'|^2,
   \quad
   T'(\lambda)=2-2\operatorname{Re}(\bar\lambda z').
\]

{\bf{Step 2}} Now,  we prove that \(z=z'\).

By \eqref{e,e'}, we have
\begin{align}\label{Phi,trace}
\Phi_{D}(T(\lambda))=\Phi_{D'}(T'(\lambda)),
\end{align}
for all \(\lambda\in\mathbb{T}\).

If \(z=0\), then
\[\Phi_{D}(T(\lambda))\stackrel{\eqref{T lambda}}{=}
   \Phi_D\bigl(2-2\operatorname{Re}(\bar\lambda z)\bigr)
   =
   \Phi_D(2)
\]
is a constant. By \eqref{Phi,trace}, we have that \(\Phi_{D'}(T'(\lambda))\) is a constant as well. 
Since \(\Phi_{D'}(T'(\lambda))\) is strictly increasing as a function of \(T'(\lambda)\) on  \([2\sqrt{D'},\infty)\)  (see 
Lemma~\ref{lem:Phi-monotone}), we have that \(z'=0\) (otherwise, \(T'(\lambda)\) is not a constant; hence \(\Phi_{D'}(T'(\lambda))\) would  not be  constant).
Hence, 
\[
   z=z'=0.
\]

Now suppose \(z\ne0\).
We claim that \(z'\ne 0\) (otherwise, by the same argument as above, we have \(z=0\)).
Since
\(\Phi_{D}(T(\lambda))\)
is increasing as a function of $T(\lambda)$ on  \( [2\sqrt{D},\infty)\) (see 
Lemma \ref{lem:Phi-monotone}), by \eqref{T lambda}, the function
\[
   \lambda\mapsto \Phi_{D}(T(\lambda)) = \|\lambda e-v\|_p^p
\]
attains its minimum when \(T(\lambda)=2-2\operatorname{Re}(\bar\lambda z)\) is minimal, which is equivalent to that
\[
   \operatorname{Re}(\bar\lambda z)
\]
is maximal, namely at the unique point
\[
   \lambda=\frac{z}{|z|}.
\]
Similarly,
\[
   \lambda\mapsto \Phi_{D'}(T'(\lambda)) =\|\lambda e'-v'\|_p^p
\]
attains its minimum at the unique point
\[
   \lambda'=\frac{z'}{|z'|}.
\]
By \eqref{Phi,trace}, their minimum points coincide. Hence, we have that
\[
   \frac{z}{|z|}
   =
   \frac{z'}{|z'|}.
\]
Therefore, there exist \(\rho,\rho'>0\) and \(\omega\in\mathbb T\) such that
\begin{align}\label{rho}
   z=\rho\omega,
   \qquad
   z'=\rho'\omega.
\end{align}

Taking \(\lambda=\omega e^{i\theta}\), we have that
\[
   \operatorname{Re}(\bar\lambda z)=\rho\cos\theta,
   \quad
   \operatorname{Re}(\bar\lambda z')=\rho'\cos\theta.
\]
Therefore, 
\begin{align}\label{cos}
   \Phi_D(2-2\rho\cos\theta)
   \stackrel{\eqref{T lambda},\eqref{Phi,trace}}{=}
   \Phi_{D'}(2-2\rho'\cos\theta),
   \qquad \theta\in [0,2\pi).
\end{align}

{\bf The case when $p=2$. }
If \(p=2\), then
\[
   \|\lambda e-v\|_2^2
   =
   {\rm Tr}\bigl((\lambda e-v)^*(\lambda e-v)\bigr)
   =
   2-2\operatorname{Re}(\bar\lambda z),
\]
and similarly,
\[
   \|\lambda e'-v'\|_2^2
   =
   2-2\operatorname{Re}(\bar\lambda z').
\]
Then we have that
\[
   2-2\operatorname{Re}(\bar\lambda z)
   \stackrel{\eqref{e,e'}}{=}
   2-2\operatorname{Re}(\bar\lambda z'),
   \qquad \lambda\in\mathbb T.
\]
Taking \(\lambda=\omega\), by \eqref{rho}, we obtain
\[
   2-2\rho=2-2\rho',
\]
and hence
\[
   \rho=\rho'.
\]
Therefore
\[
   z=\rho\omega=\rho'\omega=z'.
\]
That is,
\[
   {\rm Tr}(e^*v)\stackrel{\eqref{z,z'}}{=}{\rm Tr}((e')^*v').
\]

{\bf The case when $p\ne 2$. }  Taking \(\theta=\pi/2\), by \eqref{cos}, we obtain
\[
   \Phi_D(2)=\Phi_{D'}(2),
\]
which implies that
\begin{align}\label{D,D'}
   (1+\sqrt{1-D})^{\frac{p}{2}}
   +
   (1-\sqrt{1-D})^{\frac{p}{2}}
   =
   (1+\sqrt{1-D'})^{\frac{p}{2}}
   +
   (1-\sqrt{1-D'})^{\frac{p}{2}}.
\end{align}
Let \(r=p/2\). The function
\[
   u\mapsto (1+u)^r+(1-u)^r,
   \qquad 0\le u\le1,
\]
is strictly monotone whenever \(r\ne1\). Indeed,
\[
   \frac{d}{du}\bigl((1+u)^r+(1-u)^r\bigr)
   =
   r\bigl((1+u)^{r-1}-(1-u)^{r-1}\bigr),
\]
which is strictly positive for \(r>1\) and strictly negative for \(0<r<1\) whenever
\(0\le u<1\). Hence, by \eqref{D,D'}, we have 
\[
   D=D'.
\]
Hence, 
\[
   \Phi_D(2-2\rho\cos\theta)
   \stackrel{\eqref{cos}}{=}
   \Phi_D(2-2\rho'\cos\theta),
   \qquad \theta\in [0,2\pi).
\]
Since \(\Phi_D\) is strictly increasing, we have that
\[
   2-2\rho\cos\theta
   =
   2-2\rho'\cos\theta,
   \qquad \theta\in [0,2\pi).
\]
Taking \(\theta=0\), we obtain
\[
   \rho=\rho'.
\]
Therefore, 
\[
   z=\rho\omega=\rho'\omega=z'.
\]
That is,
\[
   {\rm Tr}(e^*v)\stackrel{\eqref{z,z'}}{=}{\rm Tr}((e')^*v').
\]
The proof is complete.
\end{proof}

Proposition \ref{rank-one decide+} below provides a criterion for the coincidence of  two positive operators in terms of $\norm{\cdot}_p$. Note that the case when $p>1$
  was treated in \cite[Lemma]{Nagy}. However, the techniques used in \cite{Nagy} are not applicable in the setting when $p\le 1$ due to the fact that  $\norm{\cdot}_p$ is not monotone with  respect to submajorization (Ky Fan norms).
We need the following lemma before proving Proposition \ref{rank-one decide+}.

\begin{lemma}\label{minimizers-rank-one}
Let $H$ be a finite-dimensional complex Hilbert space, let $0<p<\infty$ and $\gamma\ge 1$. For $T\in S(C_p(H)^+)$, define 
\[f_T:Proj_1(H)\to \mathbb R,\quad
f_T(e):=\left\|T-\gamma e\right\|_p^p.\]
Let \(\lambda_1\ge \lambda_2\ge \cdots \ge \lambda_n\)
be the eigenvalues of $T$, and let
\(M_T:=\ker\bigl(T-\lambda_1 I\bigr)\)
be the eigenspace corresponding to the maximal eigenvalue \(\lambda_1\). Then, 
\[
\min_{e\in Proj_1(H)} f_T(e)
=
\bigl(\gamma-\lambda_1\bigr)^p+1-\lambda_1^p.
\]
Moreover, for $e\in Proj_1(H)$, \(
f_T(e)\) attains its minimum value if and only if \(
\operatorname{Ran}(e)\subset M_T.
\)
\end{lemma}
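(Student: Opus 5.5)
The plan is to compute the spectrum of \(T-\gamma e\) through interlacing, reduce \(f_T(e)\) to an explicit expression in a few nonnegative slack variables, and then read off both the minimum value and the equality case. Only monotonicity of \(t\mapsto t^p\) on \([0,\infty)\) is used, never convexity, so the argument covers \(0<p<1\) as well.

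\textbf{Step 1: interlacing.} Write \(e=\xi\otimes\xi\) with \(\|\xi\|=1\). Since \(T\in S(C_p(H)^+)\), we have \(\sum_j\lambda_j^p=1\), so \(0\le\lambda_j\le 1\le\gamma\). The operator \(T-\gamma e\) is Hermitian and is a negative rank-one perturbation of \(T\). Let \(\mu_1\ge\cdots\ge\mu_n\) be its eigenvalues. By the Cauchy/Weyl interlacing theorem for rank-one perturbations \cite[Theorem 1]{Cauchy},
\[
\lambda_{j+1}\le \mu_j\le \lambda_j,\qquad j=1,\dots,n-1,
\]
and \(\mu_n\le\lambda_n\). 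Taking traces gives \(\sum_j\mu_j=\sum_j\lambda_j-\gamma\).

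\textbf{Step 2: reduction to slack variables.} Put \(\delta_j:=\mu_j-\lambda_{j+1}\ge 0\) for \(j<n\). The trace identity then yields
\[
\mu_n=\lambda_1-\gamma-\sum_{j<n}\delta_j\le \lambda_1-\gamma\le 0 .
\]
Hence \(\mu_1,\dots,\mu_{n-1}\ge 0\) and \(|\mu_n|=\gamma-\lambda_1+\sum_{j<n}\delta_j\). Since the singular values of a Hermitian operator are the absolute values of its eigenvalues,
\[
f_T(e)=\sum_{j=1}^{n-1}(\lambda_{j+1}+\delta_j)^p+\Bigl(\gamma-\lambda_1+\sum_{j<n}\delta_j\Bigr)^p .
\]
This expression is strictly increasing in each \(\delta_j\ge0\), because all bases are nonnegative and \(t\mapsto t^p\) is strictly increasing on \([0,\infty)\). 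Therefore
\[
f_T(e)\ge \sum_{j\ge2}\lambda_j^p+(\gamma-\lambda_1)^p=1-\lambda_1^p+(\gamma-\lambda_1)^p,
\]
with equality if and only if every \(\delta_j=0\). In that case the spectrum of \(T-\gamma e\) is exactly \(\{\lambda_2,\dots,\lambda_n,\lambda_1-\gamma\}\).

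\textbf{Step 3: the equality case.} If \(\operatorname{Ran}(e)\subset M_T\), then \(\xi\) is an eigenvector of \(T\) for \(\lambda_1\). So \(T-\gamma e\) is diagonal in an eigenbasis of \(T\) with spectrum \(\{\lambda_1-\gamma,\lambda_2,\dots,\lambda_n\}\), and the bound is attained. In particular the minimum value is as claimed, since \(Proj_1(H)\neq\emptyset\).

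Conversely, suppose equality holds. Compare \({\rm Tr}((T-\gamma e)^2)\) computed in two ways:
\[
{\rm Tr}(T^2)-2\gamma\langle T\xi,\xi\rangle+\gamma^2=\sum_{j\ge2}\lambda_j^2+(\lambda_1-\gamma)^2={\rm Tr}(T^2)-2\gamma\lambda_1+\gamma^2 .
\]
Hence \(\langle T\xi,\xi\rangle=\lambda_1\). By the equality case of the Rayleigh quotient theorem \cite[Theorem 4.2.2]{Horn}, \(\xi\in M_T\), i.e. \(\operatorname{Ran}(e)\subset M_T\).

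The main point needing care is Step 2. One must verify that \(\mu_n\) is the only possibly negative eigenvalue and that its absolute value increases with the slacks \(\delta_j\). This is exactly where \(\gamma\ge1\ge\lambda_1\) enters, and where the argument avoids any convexity of \(t\mapsto t^p\).
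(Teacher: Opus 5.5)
Your proof is correct and follows essentially the paper's route: interlacing gives $\mu_j\ge\lambda_{j+1}\ge 0$ for $j<n$, the last eigenvalue satisfies $\mu_n\le\lambda_1-\gamma\le 0$, termwise monotonicity of $t\mapsto t^p$ (no convexity) gives the lower bound, and the equality case of the Rayleigh quotient identifies the minimizers. The only difference is how you handle $\mu_n$: you get it exactly as $|\mu_n|=\gamma-\lambda_1+\sum_j\delta_j$ from the trace identity, and recover $\langle T\xi,\xi\rangle=\lambda_1$ from ${\rm Tr}((T-\gamma e)^2)$. The paper instead bounds $\mu_n\le\langle T\xi,\xi\rangle-\gamma$ by the Rayleigh quotient, so that $\langle T\xi,\xi\rangle=\lambda_1$ drops out directly from the chain of equalities.
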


\begin{proof}
    Since \(H\) is finite-dimensional, the set \(Proj_1(H)\) is compact. Moreover, the map \(f_T\)
is continuous. Therefore, \(f_T\) attains its minimum value on \(Proj_1(H)\). 

Fix an arbitrary $e\in Proj_1(H)$. Let $\xi \in \operatorname{Ran}(e)$
be a unit vector. Set
\[
T':=T-\gamma e,\qquad Q:=I-e,\qquad H':=\xi^\perp=\operatorname{Ran}(Q).
\]
Let \(\mu_1\ge \mu_2\ge \cdots \ge \mu_n\)
be the eigenvalues of the self-adjoint operator $T'$. Then
\[
f_T(e)=\|T'\|_p^p=\sum_{j=1}^n |\mu_j|^p.
\]
Let \(B:=(QTQ)|_{H'}\). Since   $Qz=z$ for every $z\in H'$, it follows that
\[
\langle Bz,z\rangle=\langle QTQz,z\rangle=\langle Tz,z\rangle\ge 0,
\]
which implies that $B\ge 0$. Let
\(\eta_1\ge \eta_2\ge \cdots \ge \eta_{n-1}\ge 0\) be the eigenvalues of $B$. We observe that $B$ is the principal submatrix of $T'$ corresponding to the decomposition
$H=\mathbb C\xi \oplus H'$, so Cauchy's interlacing theorem \cite[Theorem 1]{Cauchy} (see also \cite[Theorem 4.3.17]{Horn}) yields
\[
\mu_1\ge \eta_1\ge \mu_2\ge \eta_2\ge \cdots \ge \eta_{n-1}\ge \mu_n,
\]
which implies that
\begin{align}\label{mu-eta}
    \mu_j\ge \eta_j>0,\quad j=1,\dots,n-1.
\end{align}
On the other hand, $B$ is also the principal submatrix of $T$ corresponding to
the same decomposition, and another application of Cauchy's interlacing theorem \cite[Theorem 1]{Cauchy} yields
\[
\lambda_1\ge \eta_1\ge \lambda_2\ge \eta_2\ge \cdots
\ge \eta_{n-1}\ge \lambda_n\ge 0,
\]
which implies that 
\begin{align}\label{eta-lambda}
    \eta_j\ge \lambda_{j+1}\ge 0,\quad j=1,\dots,n-1
\end{align}
Then we get
\begin{align}\label{mu-lambda}
\mu_j\stackrel{\eqref{mu-eta},\eqref{eta-lambda}}{\ge} \lambda_{j+1}\ge 0,\quad j=1,\dots,n-1.
\end{align}
Next, since \(T'\) is self-adjoint, for the unit vector \(\xi \in \operatorname{Ran}(e)\), by \cite[Theorem 4.2.2(c)]{Horn}, we have
\begin{align}\label{mu_n}
\mu_n\le \langle T'\xi,\xi\rangle
=
\langle T \xi,\xi\rangle-\gamma.
\end{align}
Since $T$ is self-adjoint, it follows from \cite[Theorem 4.2.2(c)]{Horn} that 
\[\langle T\xi,\xi\rangle\le \lambda_1\le 1\le \gamma.\]
By  \eqref{mu_n}, we have 
\begin{align}\label{mu1}
|\mu_n|=-\mu_n\ge \gamma-\langle T\xi,\xi\rangle\ge \gamma-\lambda_1.
\end{align}
Let \((s_1,s_2,\dots,s_n)\) be the singular values of $T'$.
Since \(T'\) is self-adjoint, 
the singular values of $T'$ satisfy
\begin{align}\label{mu(T')}
(s_1,s_2,\dots,s_n)&\quad =~s(|\mu_1|,|\mu_2|,\dots,|\mu_n|) = s(\mu_1,\mu_2,\dots,\mu_{n-1},-\mu_n)
\\&\stackrel{\eqref{mu-lambda},\eqref{mu1}}{\ge} s(\lambda_2,\dots,\lambda_n,\gamma-\lambda_1).\nonumber
\end{align}
We observe that
if, in addition,
 $e$ is such that \(\operatorname{Ran}(e)\subset M_T\), then equality in \eqref{mu(T')} holds and hence
\(f_T(e)\) attains its minimum value by the unitary invariance of \(\norm{\cdot}_p\). Thus,
\[\min_{e\in Proj_1(H)} \left\|T-\gamma e\right\|_p^p\stackrel{\eqref{mu(T')}}{=}(\gamma-\lambda_1)^p+1-\lambda_1^p.\]

Conversely, if \(e\in Proj_1(H)\) is such that \(f_T(e)\) attains its minimum value, then
\begin{align*}
(s_1,s_2,\dots,s_n)=s(\mu_1,\mu_2,\dots,\mu_{n-1},-\mu_n)\stackrel{\eqref{mu(T')}}{=} 
s(\lambda_2,\dots,\lambda_n,\gamma-\lambda_1),
\end{align*}
which implies that the inequalities \eqref{mu-lambda} and \eqref{mu1} must be equalities. Hence, for the unit vector $\xi \in \operatorname{Ran}(e)$,
we have that
\[
-\mu_n\stackrel{\eqref{mu1}}{=} \gamma-\langle T\xi,\xi\rangle\stackrel{\eqref{mu1}}{=}\gamma-\lambda_1,
\]
which implies that
\[
\langle T\xi,\xi\rangle=\lambda_1.
\]
Since \(\lambda_1\) is the maximum eigenvalue of $T$, by \cite[Theorem 4.2.2(c)]{Horn}, we have that
\[T\xi=\lambda_1 \xi.\]
Therefore, we have $\xi\in M_T$, i.e.
\[
\operatorname{Ran}(e)\subset M_T.
\]
This completes the proof.\end{proof}

\begin{proposition}\label{rank-one decide+}
Let $H$ be a finite-dimensional complex Hilbert space, and let $0<p<\infty,\gamma\ge 1$. If \(x,y\in S(C_p(H)^+)\) satisfy the equality
\begin{align}\label{x-gamme e}
    \left\|x-\gamma e\right\|_p^p=\left\|y-\gamma e\right\|_p^p,
\end{align}
for all \(e\in Proj_1(H)\), then \(x=y\).
\end{proposition}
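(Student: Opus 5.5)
We argue by induction on the number of distinct eigenvalues, or equivalently by peeling off the top eigenspace, using Lemma \ref{minimizers-rank-one} as the basic tool.

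\textbf{Step 1: the top eigenspaces coincide.} By Lemma \ref{minimizers-rank-one}, the minimum of $f_x$ over $Proj_1(H)$ equals $g(\lambda_1(x))$, where $g(t)=(\gamma-t)^p+1-t^p$. By \eqref{x-gamme e}, $f_x=f_y$, so $g(\lambda_1(x))=g(\lambda_1(y))$. Moreover, the set of minimizers of $f_x$ and of $f_y$ is the same. By the ``if and only if'' part of Lemma \ref{minimizers-rank-one}, a rank-one projection $e$ satisfies $\operatorname{Ran}(e)\subset M_x$ exactly when $\operatorname{Ran}(e)\subset M_y$. Hence $M_x=M_y=:M$.

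\textbf{Step 2: the top eigenvalues coincide.} We must show $g$ is injective on $[0,1]$. Its derivative is $g'(t)=-p\bigl((\gamma-t)^{p-1}+t^{p-1}\bigr)<0$ for $0<t<1\le\gamma$, so $g$ is strictly decreasing. Therefore $\lambda_1(x)=\lambda_1(y)$, and $x$ and $y$ agree on $M$.

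\textbf{Step 3: reduce to the complement.} Write $x=\lambda_1 P_M+x_0$ and $y=\lambda_1 P_M+y_0$, with $x_0,y_0\ge 0$ supported in $M^\perp$. We want to recover $x_0=y_0$ from the distance data. The natural approach is to test \eqref{x-gamme e} with $e\le P_{M^\perp}$. In that case $x-\gamma e=\lambda_1P_M\oplus(x_0-\gamma e)$, so
\[
\|x_0-\gamma e\|_p^p=\|y_0-\gamma e\|_p^p\qquad\text{for all } e\in Proj_1(M^\perp).
\]
Here $\|x_0\|_p^p=\|y_0\|_p^p=1-\dim M\cdot\lambda_1^p=:c$.

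\textbf{Step 4: rescale and induct.} If $c=0$, we are done. Otherwise, normalize $\tilde x=x_0/c^{1/p}$, $\tilde y=y_0/c^{1/p}$ and $\tilde\gamma=\gamma/c^{1/p}\ge\gamma\ge1$. Dividing by $c$ gives the same hypothesis on the lower-dimensional space $M^\perp$, since $\|x_0-\gamma e\|_p^p=c\,\|\tilde x-\tilde\gamma e\|_p^p$. Induction on $\dim H$ then gives $\tilde x=\tilde y$, hence $x=y$. The base case $\dim H=1$ is trivial.

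\textbf{Main obstacle.} The delicate point is making the induction close. The statement must hold for all $\gamma\ge1$, which is why the proposition is phrased with a general $\gamma$: the rescaling in Step 4 only increases $\gamma$, so the hypothesis $\gamma\ge 1$ is preserved. We also need to check that Lemma \ref{minimizers-rank-one} applies on $M^\perp$. This requires $\tilde x\in S(C_p(M^\perp)^+)$, which holds by construction.
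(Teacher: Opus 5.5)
Your proposal is correct and takes essentially the same approach as the paper: induction on $\dim H$, with Lemma~\ref{minimizers-rank-one} and the strict monotonicity of $t\mapsto(\gamma-t)^p+1-t^p$ giving equal top eigenvalues and equal top eigenspaces, followed by restriction to $M^\perp$ and rescaling to $\tilde\gamma=\gamma c^{-1/p}\ge 1$ to close the induction. Your opening mention of induction on the number of distinct eigenvalues is unnecessary, since the induction you actually carry out is on $\dim H$, as in the paper.
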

\begin{proof}
Arguing similarly to the proof of \cite[Lemma]{Nagy}, we use induction on \(\dim(H)=n\). It is obvious that the lemma holds for \(n=1\). Assume it holds for all dimensions less than \(n\). Let \(\lambda_1\ge \lambda_2\ge \cdots \ge \lambda_n\), \(\mu_1\ge \mu_2\ge \cdots \ge \mu_n\) be the eigenvalues of \(x,y\), respectively, and let \(M_x:=\ker\bigl(x-\lambda_1 I\bigr)\), \(M_y:=\ker\bigl(y-\lambda_1 I\bigr)\). 
It follows from \eqref{x-gamme e} that 
\[\min_{e\in Proj_1(H)} \left\|x-\gamma e\right\|_p^p
=\min_{e\in Proj_1(H)} \left\|y-\gamma e\right\|_p^p.\]
By Lemma \ref{minimizers-rank-one}, we have
\[(\gamma-\lambda_1)^p+1-\lambda_1^p=(\gamma-\mu_1)^p+1-\mu_1^p.\]
Since the function \(t\mapsto (\gamma-t)^p+1-t^p\) is strictly decreasing on \([0,1]\), we have \(\lambda_1=\mu_1\). Moreover, by \eqref{x-gamme e} and Lemma \ref{minimizers-rank-one}, we have \(M_x=M_y\). Hence, 
\begin{align}\label{Mx}
x|_{M_x}=y|_{M_y}.
\end{align}
If \(\|x|_{M_x}\|_p=\|y|_{M_y}\|_p=1\), the proof is complete. Assume now that \(\|x|_{M_x}\|_p=\|y|_{M_y}\|_p<1\) and
consider the operators \(x|_{M_x^\perp}, y|_{M_y^\perp}\) and the subspace \(M_x^\perp\) with \(\dim(M_x^\perp)<n\). We observe that 
\begin{align}\label{M_x perp}
1>  \left\|x|_{M_x^\perp}\right\|_p^p=\left\|x\right\|_p^p-\left\|x|_{M_x}\right\|_p^p
\stackrel{\eqref{Mx}}{=}\left\|y\right\|_p^p-\left\|y|_{M_y}\right\|_p^p=\left\|y|_{M_y^\perp}\right\|_p^p> 0.
\end{align}
For every \(e\in Proj_1(M_x^\perp)\), we have that
\begin{align}\label{e in M_x perp}
\left\|x|_{M_x^\perp}-\gamma e\right\|_p^p&=\left\|x-\gamma e\right\|_p^p-\left\|x|_{M_x}\right\|_p^p
\stackrel{\eqref{x-gamme e},\eqref{Mx}}{=}\left\|y-\gamma e\right\|_p^p-\left\|y|_{M_y}\right\|_p^p
\\&=\left\|y|_{M_y^\perp}-\gamma e\right\|_p^p \nonumber.
\end{align}
Let 
\[\gamma':=\frac {\gamma}{\left\|x|_{M_x^\perp}\right\|_p}\stackrel{\eqref{M_x perp}}{>} 1.\]
Then, for every \(e\in Proj_1(M_x^\perp)\),
\[\left\|\frac{x|_{M_x^\perp}}{\left\|x|_{M_x^\perp}\right\|_p}-\gamma' e\right\|_p^p
\stackrel{\eqref{e in M_x perp}}{=}\left\|\frac{y|_{M_y^\perp}}{\left\|y|_{M_y^\perp}\right\|_p}-\gamma' e\right\|_p^p.\]
By the induction hypothesis, \(x|_{M_x^\perp}=y|_{M_y^\perp}\). Hence, \(x=y\).
\end{proof}

A useful criterion for the coincidence of  two (not necessarily positive) operators in $C_p(H)$,  \(p>1\),  is given in \cite[Proposition 2.10]{Polo}.
Proposition \ref{rank-one partial isometry decide} below extends 
\cite[Proposition 2.10]{Polo} to the setting \(0<p\ne 2<\infty\). 
Before proceeding to the proof of 
Proposition~\ref{rank-one partial isometry decide}, we need the following lemma, whose proof relies on Weyl submajorization (logarithmic submajorization). 
\begin{lemma}\label{positive decide}
Let \(H\) be a finite-dimensional Hilbert space, and let
\(a\in C_p(H)\) be an invertible positive operator  (for simplicity, we write \(a>0\)), where \(0<p<\infty\). For every \(e\in Proj_1(H)\) and every \(\lambda\in\mathbb T\), we have
\begin{align}\label{lambda0}
   \|a-e\|_p^p\le \|a-\lambda e\|_p^p ,
\end{align}
and  equality holds if and only if \(\lambda =1\).

Conversely, if \(a\in C_p(H)\) satisfies the preceding conditions, then \(a>0\).
\end{lemma}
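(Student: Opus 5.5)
The plan is to reduce everything to the rank-one perturbation structure of $a-\lambda e$. Write $e=\xi\otimes\xi$ with $\xi$ a unit vector, set $\eta:=a^{-1/2}\xi$ and $c:=\langle a^{-1}\xi,\xi\rangle=\|\eta\|^2>0$. Then
\[
a-\lambda e=a^{1/2}\bigl(I-\lambda\,\eta\otimes\eta\bigr)a^{1/2}.
\]
The middle factor is normal, with eigenvalue $1-\lambda c$ on $\mathbb C\eta$ and eigenvalue $1$ on $\eta^\perp$. Taking determinants gives
\[
|\det(a-\lambda e)|=\det(a)\,|1-\lambda c|.
\]
Since $c>0$, we have $|1-\lambda c|\ge |1-c|$ for $\lambda\in\mathbb T$, with equality if and only if $\lambda=1$. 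This is the source of both the inequality and its strictness. It is the same ``determinant is the only $\lambda$-dependent invariant'' phenomenon used in Lemma \ref{lem:trace-recovery}.

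\textbf{Step 1: the inequality.} Because $\|\cdot\|_p$ is not monotone under ordinary submajorization when $p\le 1$, I would work with weak logarithmic submajorization. The goal is
\[
\prod_{j\le k}s_j(a-e)\le\prod_{j\le k}s_j(a-\lambda e),\qquad k=1,\dots,n.
\]
Since $t\mapsto e^{pt}$ is convex and increasing, this gives $\sum_j s_j(a-e)^p\le\sum_j s_j(a-\lambda e)^p$ for every $p>0$. To get the product inequality I would apply \cite[Theorem 2]{zhan} (a log-submajorization inequality for positive semidefinite matrices, which I am assuming is of the form $s(A-B)\prec_{w\log}s(A-zB)$ for $A,B\ge0$ and $|z|=1$) with $A=a$ and $B=e$. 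If that does not match the form of Zhan's theorem exactly, the fallback is antisymmetric tensors. One has $\wedge^k(a-\lambda e)=\wedge^k a-\lambda E_k$, where $E_k\ge 0$ collects the terms containing exactly one factor $e$; terms with two or more factors of $e$ vanish because $e$ has rank one. Both operators are positive. One then compares $\|\wedge^k a-E_k\|_\infty$ with $\|\wedge^k a-\lambda E_k\|_\infty$, which I would attempt via the Zhan-type inequality again.

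\textbf{Step 2: strictness.} Suppose $\|a-e\|_p=\|a-\lambda e\|_p$. The case $k=n$ of the log-submajorization is exactly the determinant comparison above. If $|1-\lambda c|>|1-c|$, the $n$-th product inequality is strict. A weak log-submajorization with a strict final inequality forces a strict inequality in $\sum s_j^p$, because $e^{pt}$ is strictly increasing: one can lower the last coordinate to reach a genuine log-majorization and then use strict monotonicity. Hence equality forces $|1-\lambda c|=|1-c|$, that is, $\lambda=1$.

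\textbf{Step 3: the converse.} Assume the condition holds for every $e\in Proj_1(H)$ and every $\lambda\in\mathbb T$.
\begin{itemize}
\item \emph{$a$ is invertible.} If not, pick a unit vector $\xi$ with $a^*\xi=0$ and set $e=\xi\otimes\xi$. Then $ea=\xi\otimes a^*\xi=0$ and $a^*e=0$, so $(a-\lambda e)^*(a-\lambda e)=a^*a+e$ is independent of $\lambda$. This contradicts strictness at any $\lambda\ne1$.
\item \emph{$a$ is positive.} Now $a$ is invertible, and the determinant formula $|\det(a-\lambda e)|=|\det a|\,|1-\lambda\langle a^{-1}\xi,\xi\rangle|$ still holds. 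The minimality of $\lambda=1$ for all $e$ should force $\langle a^{-1}\xi,\xi\rangle>0$ for every unit $\xi$, hence $a^{-1}>0$ and so $a>0$. To make this rigorous I would differentiate $\theta\mapsto\|a-e^{i\theta}e\|_p^p$ at $\theta=0$, or use the polar decomposition $a=u|a|$ and test with eigenvectors of $u$.
\end{itemize}

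I expect this last step to be the main obstacle. The minimum of $\lambda\mapsto\|a-\lambda e\|_p$ is not governed by the determinant alone when $a$ is not positive, so one needs a first-order perturbation argument. The key computation is a Rayleigh-quotient-type identity showing that the derivative in $\theta$ at $0$ equals a constant times $\operatorname{Im}\langle a\xi,\xi\rangle$-type quantities. That identity is what would pin down that all numerical values of $a$ are positive.
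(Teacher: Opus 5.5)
Your Steps 1 and 2 follow the paper's route: Zhan's weak log-majorization $s(a-e)\prec_{w\log}s(a-\lambda e)$ combined with the rank-one determinant formula $\det(a-\lambda e)=\det(a)\bigl(1-\lambda\langle a^{-1}\xi,\xi\rangle\bigr)$. For strictness you lower $y_n$ until the $n$-th products agree and then use strict monotonicity of $t\mapsto t^p$. This is a valid and slightly simpler substitute for the paper's use of the convex functions $-\log(t+\varepsilon)$. The lowering is always possible, even when $\det(a-e)=0$, because $\prod_{i<n}y_i>0$ follows from $\prod_i y_i>\prod_i x_i\ge 0$. The invertibility half of your converse is also correct.

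The gap is the positivity half of the converse, which you do not prove. The determinant heuristic cannot settle it: for non-positive $a$, Zhan's log-majorization is unavailable, so $|\det(a-\lambda e)|$ alone says nothing about $\|a-\lambda e\|_p$. The first-order identity you call the key is not supplied, and it does not have the form you expect. When $a-e$ is invertible,
\[
\frac{d}{d\theta}\|a-e^{i\theta}e\|_p^p\Big|_{\theta=0}=-p\,\operatorname{Im}\langle (a-e)|a-e|^{p-2}\xi,\xi\rangle ,
\]
which for $p\neq 2$ does not reduce to a statement about $\langle a\xi,\xi\rangle$. In any case, stationarity at $\theta=0$ alone would not give the sign information positivity requires.

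No perturbation argument is needed. The alternative you mention in passing closes at once if you feed in the forward direction for $|a|$. Write $a=u|a|$ with $u$ unitary and $|a|>0$, using the invertibility you already have. Unitary invariance gives $\|a-\lambda e\|_p=\||a|-\lambda u^*e\|_p$. If $u\neq I$, take a unit eigenvector $\eta_0$ of $u^*$ with eigenvalue $\omega_0\neq 1$ and set $e=\eta_0\otimes\eta_0$, so that $u^*e=\omega_0 e$. The hypothesis with $\lambda=\bar\omega_0\neq 1$ then reads
\[
\||a|-\omega_0 e\|_p=\|a-e\|_p<\|a-\bar\omega_0 e\|_p=\||a|-e\|_p .
\]
This contradicts the inequality $\||a|-e\|_p\le\||a|-\omega_0 e\|_p$, already proved for the positive invertible operator $|a|$. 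Hence $u=I$ and $a=|a|>0$, which is exactly the paper's argument.
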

\begin{proof}
   Assume that \(\dim(H)=n\).

   {\bf{Step 1}} We prove the inequality \eqref{lambda0} when \(a>0\). 
   
   For a fixed \(e\in Proj_1(H)\), let \(x_1\ge x_2\ge \cdots\ge x_n\) be the singular values of \(a-e\), and let \(y_1\ge y_2\ge \cdots\ge y_n\) be the singular values of \(a-\lambda e\) with \(\lambda\in\mathbb T\). Let
   \[s(a-e)=(x_1, x_2, \ldots, x_n),\quad s(a-\lambda e)=(y_1, y_2, \ldots, y_n).\]
   Since \(a,e\ge 0\) and \(\lambda\in\mathbb T\), it follows from  \cite[Theorem 2]{zhan} that 
   \[\prod_{i=1}^{k}x_i\le \prod_{i=1}^{k}y_i, \quad 1\le k\le n.\]
   It follows from \cite[Proposition 1.3]{Hiai} that
   \[\sum_{i=1}^{k}x_i^p\le \sum_{i=1}^{k}y_i^p, \quad 1\le k\le n.\]
   Hence, we have that
   \[
   \|a-e\|_p^p\le \|a-\lambda e\|_p^p.
   \]
   {\bf{Step 2}} We prove the strict inequality when \(a>0\) and \(\lambda\in\mathbb T\setminus \{1\}\).

   Let
   \[
   P=P_\xi=\xi \otimes \xi,\quad \|\xi\|=1.
   \]
   Since \(a>0\) is invertible, \(a^{-1}>0\). We have that
   \[
   \theta=\langle a^{-1}\xi,\xi\rangle>0 .
   \]
   Since \(a\) is invertible and \(P\) is a rank-one projection, by the rank-one determinant formula (see \cite[Lemma 1.1]{Ding.J}), we have
   \begin{align}\label{rank-one}
    \det(a-\lambda P)
   =
   \det(a)\bigl(1-\lambda\langle a^{-1}\xi,\xi\rangle\bigr)
   =\det(a)(1-\lambda\theta).
   \end{align}
   In particular,
   \[
   \det(a-P)=\det(a)(1-\theta).
   \]
   It is known that
   \begin{align}\label{det}
   \prod_{i=1}^{n}x_i=\left|\det(a- e)\right|,\quad \prod_{i=1}^{n}y_i=\left|\det(a-\lambda e)\right|.
   \end{align}
   Hence, we have that
   \begin{align}\label{positive =}
   \prod_{i=1}^{n}y_i-\prod_{i=1}^{n}x_i&\stackrel{\eqref{det}}{=}|\det(a-\lambda e)|-|\det(a-e)|
   \\&\stackrel{\eqref{rank-one}}{=}\det(a)(|1-\lambda\theta|-|1-\theta|).\nonumber
   \end{align}
   Since \(\lambda\in\mathbb T\setminus \{1\}\), we have that
   \begin{align}\label{lambda 1}
   |1-\lambda\theta|^2-|1-\theta|^2
   &=(1+\theta^2-2\theta\operatorname{Re}\lambda)-(1+\theta^2-2\theta)
   \\&=2\theta(1-\operatorname{Re}\lambda)>0.\nonumber
   \end{align}
   Then we have that
   \begin{align}\label{positive >}
       \prod_{i=1}^{n}y_i-\prod_{i=1}^{n}x_i\stackrel{\eqref{positive =},\eqref{lambda 1}}{>}0
   \end{align}
   Assume that there exists \(\lambda\in\mathbb T\setminus \{1\}\) such that
   \[
   \|a-e\|_p^p= \|a-\lambda e\|_p^p,
   \]
   that is,
   \[
   \sum_{i=1}^{n} x_i^p=\sum_{i=1}^{n} y_i^p.
   \]
   Since for every \(\varepsilon>0\), the function \(t\mapsto -\log(t+\varepsilon)\) is convex, by \cite[Proposition 1.1]{Hiai}, we have that
   \[
   -\sum_{i=1}^{n} \log(x_i^p+\varepsilon)
   \le
   -\sum_{i=1}^{n} \log(y_i^p+\varepsilon),
   \]
   which implies that
   \[
   \sum_{i=1}^{n} \log(x_i^p+\varepsilon)=\log \left( \prod_{i=1}^{n}(x_i^p+\varepsilon)\right)\ge \sum_{i=1}^{n} \log(y_i^p+\varepsilon)=\log \left( \prod_{i=1}^{n}(y_i^p+\varepsilon)\right).
   \]
   Letting \(\varepsilon\downarrow 0\), we obtain that
   \[
   \left(\prod_{i=1}^{n} x_i\right)^p\ge \left(\prod_{i=1}^{n} y_i\right)^p.
   \]
   Since the function \(t\mapsto t^p\) is increasing, we have
   \[
   \prod_{i=1}^{n} x_i\ge \prod_{i=1}^{n} y_i,
   \]
   which contradicts the inequality \eqref{positive >}. Therefore, we have
   \begin{align*}
    \|a-e\|_p^p< \|a-\lambda e\|_p^p,
   \end{align*}
   for \(e\in Proj_1(H)\) and \(\lambda\in\mathbb T\setminus \{1\}\).

   {\bf{Step 3}} Let  \(a\in C_p(H)\) satisfy \eqref{lambda0} and equality holds if and only if \(\lambda =1\).
   We prove that \(a>0\).

   We first prove that \(r(a)=I\). Otherwise, there exists \(\xi\in H\) such that
   \[
   a\xi=0.
   \]
   Letting \(e=\xi \otimes \xi\), we have that
   \[
   ae=0=ea^*.
   \]
   Hence, for \(\lambda\in\mathbb T\), we have
   \begin{align*}
   (a-\lambda e)(a-\lambda e)^*&=aa^*-\bar{\lambda}ae^*-\lambda ea^*+\lambda\bar{\lambda}ee^*
   \\&=aa^*-\bar{\lambda}a e-\lambda ea^*+\lambda\bar{\lambda}e
   \\&=aa^*+e,
   \end{align*}
   which implies that 
   \[
   \left\|a-\lambda e\right\|_p^p=\left\|(a-\lambda e)^*\right\|_p^p=\left\|\left((a-\lambda e)(a-\lambda e)^*\right)^{\frac{1}{2}}\right\|_p^p
   \]
  does not depend on \(\lambda\), contradicting the condition that the 
  equality holds if and only if \(\lambda =1\)
  in the lemma.

  Next, we prove that \(a>0\). By \cite[Theorem 1.7.3]{DPS}, there exists a unitary operator \(u\in \mathcal{B}(H)\) such that
   \[
   a=u|a|,
   \]
in other words, 
   \[
   |a|=u^*a.
   \]
   Since \(r(a)=I\), it follows that  \(|a|>0\). By the above argument,   \(|a|\) satisfies the conditions. 
   Since \(\|\cdot\|_p^p\) is unitarily invariant, we have  
   \begin{align}\label{unitary}
   \left\|a-\lambda e\right\|_p^p=\left\||a|-\lambda u^* e\right\|_p^p.
   \end{align}
   By \cite[Theorem 2.5.3]{Horn}, there exists an
   orthonormal basis \(\{\eta_1,\ldots,\eta_n\}\) of \(H\) consisting of eigenvectors
   of \(u^*\), and numbers \(\{\omega_1,\ldots,\omega_n\}\subset \mathbb{T}\) such that
   \[
   u^*\eta_i=\omega_i\eta_i,\qquad i=1,\ldots,n.
   \]
   If \(u^*\ne I\), there exist \(\omega_0\in \{\omega_1,\ldots,\omega_n\}\subset \mathbb{T}\) and \(\eta_0\in \{\eta_1,\ldots,\eta_n\}\) such that
   \begin{align}\label{omege0,eta0}
   \omega_0\ne 1,\quad u^*\eta_0=\omega_0\eta_0
   \end{align}
   Letting \(e=\eta_0\otimes\eta_0\), we have that 
   \begin{align*}
   \left\||a|-\omega_0 e\right\|_p^p&\stackrel{\eqref{omege0,eta0}}{=}\left\||a|- u^* e\right\|_p^p\stackrel{\eqref{unitary}}{=}\left\|a- e\right\|_p^p
   \\&\stackrel{\eqref{lambda0}}{<}\left\|a- \lambda e\right\|_p^p
   \stackrel{\eqref{unitary}}{=}\left\||a|-\lambda u^* e\right\|_p^p\stackrel{\eqref{omege0,eta0}}{=}\left\||a|-\lambda \omega_0 e\right\|_p^p,
   \end{align*}
   which implies that
   \begin{align}\label{omega0}
   \left\||a|-\omega_0 e\right\|_p^p<\left\||a|-\lambda \omega_0 e\right\|_p^p.
   \end{align}
   Let 
   \[\lambda_0=\bar{\omega}_0\stackrel{\eqref{omege0,eta0}}{\ne} 1.\]
   Then we have
   \[
   \left\||a|-\omega_0 e\right\|_p^p\stackrel{\eqref{omega0}}{<}\left\||a|-\lambda_0 \omega_0 e\right\|_p^p=\left\||a|- e\right\|_p^p,
   \]
   which contradicts \eqref{lambda0}. Hence, we have  
   \[u^*=I,\]
   which implies that
   \[
   a=|a|>0.
   \]
   This completes the proof.
\end{proof}

\begin{proposition}\label{rank-one partial isometry decide}
    Let $H$ be a finite-dimensional complex Hilbert space, let $0<p\neq 2<\infty$. If \(x,y\in S(C_p(H))\) satisfy
    \[\left\|x- e\right\|_p^p=\left\|y- e\right\|_p^p, \]
    for all \(e\in \mathcal U_{\min}(H)\), then \(x=y\).
\end{proposition}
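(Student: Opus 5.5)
The plan is to reduce the statement to the positive case, i.e.\ to Proposition~\ref{rank-one decide+} with $\gamma=1$. The bridge is Lemma~\ref{positive decide}, which lets us read off, from the function $F_x(e):=\|x-e\|_p^p$ on $\mathcal U_{\min}(H)$ alone, when $w^*x$ is positive invertible for a partial isometry $w$. Throughout we assume $F_x=F_y$ on $\mathcal U_{\min}(H)$.

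\textbf{Step 1 (supports coincide).} For $e\in\mathcal U_{\min}(H)$ and $x\in S(C_p(H))$, we use that $e\perp x$ if and only if $\|x-e\|_p^p=\|x\|_p^p+\|e\|_p^p=2$. For $0<p<1$ this is Lemma~\ref{orth for p<1}. For $1\le p\ne 2$ it is the corresponding equality case of the Clarkson--McCarthy type inequalities, as in \cite{Polo,Polo2}. This is the only place where $p\neq 2$ is used, and it is genuinely needed: for $p=2$ the identity $\|x-e\|_2^2=2$ holds whenever ${\rm Tr}(e^*x)=0$. Since $F_x=F_y$, we get $\{e: e\perp x\}=\{e:e\perp y\}$. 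Now $\eta\otimes\xi\perp x$ exactly when $\xi\in\ker x$ and $\eta\in\ker x^*$. If $x$ is invertible, no such $e$ exists, so the same holds for $y$; otherwise we deduce $\ker x=\ker y$ and $\ker x^*=\ker y^*$. In either case $r(x)=r(y)=:r$ and $\ell(x)=\ell(y)=:\ell$.

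\textbf{Step 2 (compression).} Put $K_1=\operatorname{Ran}(r)$ and $K_2=\operatorname{Ran}(\ell)$. These have equal dimension $m$, and $x$ and $y$ restrict to invertible maps $K_1\to K_2$. Let $x=w|x|$ be the polar decomposition, so that $w$ is a unitary from $K_1$ onto $K_2$ and $a:=w^*x=|x|$ is positive invertible on $K_1$. For $e=w P$ with $P\in Proj_1(K_1)$ and $\lambda\in\mathbb T$, the element $\lambda w P$ lies in $\mathcal U_{\min}(H)$. Since $x-\lambda wP$ is supported in $K_1\to K_2$, unitary invariance gives
\[
\|x-\lambda wP\|_p^p=\|a-\lambda P\|_{p}^p,
\]
computed in $C_p(K_1)$, and likewise $\|y-\lambda wP\|_p^p=\|w^*y-\lambda P\|_p^p$. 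Hence
\[
\|w^*y-\lambda P\|_p^p=\|a-\lambda P\|_p^p \quad\text{for all } P\in Proj_1(K_1),\ \lambda\in\mathbb T .
\]

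\textbf{Step 3 (positivity transfers).} By Step 1 of Lemma~\ref{positive decide}, $a$ satisfies \eqref{lambda0} with equality only at $\lambda=1$. The displayed identity transfers this property to $b:=w^*y$ on $K_1$. By the converse part of Lemma~\ref{positive decide}, applied in the finite-dimensional space $K_1$, we get $b>0$. Both $a$ and $b$ lie in $S(C_p(K_1)^+)$, since $\|w^*y\|_p=\|y\|_p=1$, and $\|a-P\|_p^p=\|b-P\|_p^p$ for all $P\in Proj_1(K_1)$.

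\textbf{Step 4 (conclusion).} Proposition~\ref{rank-one decide+} with $\gamma=1$ yields $a=b$, that is, $w^*x=w^*y$. Since $w$ is unitary from $K_1$ onto $K_2$ and both $x$ and $y$ vanish on $K_1^\perp$ with range in $K_2$, it follows that $x=y$.

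The main obstacle is Step 1: the metric characterization of orthogonality to a minimal partial isometry for all $p\ne 2$, including its equality case. For $p<1$ this is Lemma~\ref{orth for p<1}, but for $1\le p$ I would need to cite or reprove the sharp equality case of the McCarthy inequality. Handling the degenerate case $x\ne 0$ non-invertible through this orthogonality argument avoids any approximation argument. The rest is bookkeeping with unitary invariance under the compression.
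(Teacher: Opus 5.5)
Your architecture (transfer the supports, compress by the polar decomposition, transfer positivity with Lemma~\ref{positive decide}, finish with Proposition~\ref{rank-one decide+} for $\gamma=1$) is the paper's, and your Steps 2--4 are sound. The gap is in Step 1. The characterization ``$e\perp x$ if and only if $\|x-e\|_p^p=2$'' holds only for $0<p<1$, where it is Lemma~\ref{orth for p<1} applied to the pair $x,-e$. For every $p\ge 1$ the implication $\|x-e\|_p^p=2\Rightarrow x\perp e$ is false, and this is exactly the direction you need to pass from $x$ to $y$. For $p=1$, take $x=-e$. In general, take $H=\mathbb C^2$, $e=\mathrm{diag}(1,0)$ and $x=\mathrm{diag}\bigl(s\zeta,(1-s^p)^{1/p}\bigr)$ with $0<s<1$. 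Choose $\zeta\in\mathbb T$ so that $|1-s\zeta|^p=1+s^p$; this is possible because $(1-s)^p<1+s^p\le(1+s)^p$ for $p\ge 1$. Then $\|x\|_p=1$ and $\|x-e\|_p^p=2$, yet $x^*e\neq 0$. The equality case of the Clarkson--McCarthy inequalities concerns the sum $\|x+e\|_p^p+\|x-e\|_p^p$, not $\|x-e\|_p^p$ alone, so it cannot give your one-sided statement.

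The repair is immediate, and it is what the paper does in its Step 2. Since $-e\in\mathcal U_{\min}(H)$, the hypothesis also gives $\|x+e\|_p^p=\|y+e\|_p^p$. So if $e\perp x$, then $\|y-e\|_p^p=\|y+e\|_p^p=2$, i.e.\ $\|y+e\|_p^p+\|y-e\|_p^p=2(\|y\|_p^p+\|e\|_p^p)$. For $p\neq 2$ this equality forces $y\perp e$ by \cite[Theorem A.1]{RX}. With the pair $\pm e$ in place of $e$, your kernel bookkeeping, including the invertible case, and your remaining steps go through unchanged. The other differences from the paper are cosmetic. The paper multiplies by a unitary $u$ with $ux=|x|$ instead of using the partial isometry $w$. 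It also settles the invertible case directly through Lemma~\ref{positive decide} rather than via orthogonality.
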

\begin{proof}
    {\bf{Step 1}} For \(x\in S(C_p(H))\), by \cite[Theorem 1.7.3]{DPS}, there exists a unitary \(u\in \mathcal{B}(H)\) such that
    \[x=u^*|x|,\qquad uu^*= r(x), \qquad u^*u= \ell(x).\] 
    In other words,
    \[|x|=ux.\]
    Since \(\left\|\cdot\right\|_p^p\) is unitarily invariant, we have  
    \[\left\||x|- ue\right\|_p^p=\left\|ux- ue\right\|_p^p=\left\|uy- ue\right\|_p^p, \]
    for all \(e\in \mathcal U_{\min}(H)\). Since \(ue\) is also a minimal partial isometry, we can write
    \begin{align}\label{condition}
    \left\||x|- e\right\|_p^p=\left\|uy- e\right\|_p^p, 
    \end{align}
    for all \(e\in \mathcal U_{\min}(H)\).

    {\bf{Step 2}} We prove that \( \ell(uy)= r(uy)= r(|x|)\). 
    
    If \(|x|>0\) on \(H\), then,
    by \eqref{condition} and Lemma \ref{positive decide}, \(uy\) satisfies the condition in Lemma~\ref{positive decide}. 
    Then, \(uy>0\), which implies that 
    \[ \ell(uy)= r(uy)=I= r(|x|).\] 
    If \(|x|\) is not invertible,
    we may assume that \( r(uy)\neq  r(|x|)\). At least one of the following inequalities holds:
    \[ r(uy) (I-  r(|x|)) \ne 0
        \text{ or }  r(|x|)(I- r(uy)) \ne 0.\]
    Assume the former one holds, and let
    \[e_0\le  I-  r(|x|))   \]
    be a rank-one projection  such that $r(uy) e_0\ne 0.$
    Hence, we have 
    \begin{align}\label{e0}
    e_0\perp |x|, \quad e_0\not\perp uy.
    \end{align}
We have 
    \[\left\|uy- e_0\right\|_p^p\stackrel{\eqref{condition}}{=}\left\||x|- e_0\right\|_p^p=2,\quad
    \left\|uy+ e_0\right\|_p^p\stackrel{\eqref{condition}}{=}\left\||x|+ e_0\right\|_p^p=2.
    \]
    By  \cite[Theorem A.1]{RX}, we have  
    \[uy\perp e_0\]
    which contradicts \eqref{e0}.
    Hence,  
    \[ r(uy)=  r(|x|).\]
    The same argument yields that \( \ell(uy)=  r(|x|)\). 

    {\bf{Step 3}} We consider the subspace \(\cB(H_0)= r(|x|)\cB(H) r(|x|)\). By Step 2, we have 
    \[|x|,uy\in S(C_p(H_0)).\]
    Then, 
    \begin{align}\label{condition2}
    \left\||x|- \lambda q\right\|_p^p\stackrel{\eqref{condition}}{=}\left\|uy-\lambda q\right\|_p^p
    \end{align}
    for \(q\in Proj_1(H_0)\) and \(\lambda\in \mathbb T\). Since \(|x|>0\) on \(H_0\), it follows from Lemma \ref{positive decide} that, for every \(q\in Proj_1(H_0)\) and every \(\lambda\in\mathbb T\), we have 
\[
   \||x|-q\|_p^p\le \||x|-\lambda q\|_p^p ,
\]
and equality holds if and only if \(\lambda =1\). Then, by \eqref{condition2}, \(uy\) satisfies the conditions in Lemma \ref{positive decide}. Hence,
\[uy>0\]
on \(H_0\). 
Then, by \eqref{condition}, \(|x|,uy\) also satisfy the condition in Lemma \ref{rank-one decide+} with \(\gamma=1\). Hence,  
\[uy=|x|\]
on \(H_0\). Since \(uy,|x|\in S(C_p(H_0))\), we have
\[uy=|x|\]
which implies that
\[x=y.\]
The proof is complete. 
\end{proof}

We recall the following result concerning the general form of a bijective function on the minimal partial isometries \cite[Theorem 2]{Molnar} (see also \cite[Theorem 2.15]{Polo})
as  the last preparation for proving Theorem \ref{main}.

\begin{theorem}\label{wigner}
    Let $H$ be a complex Hilbert space. Let \(F:\cU_{\min}(H)\to\cU_{\min}(H)\) be a bijective function with the property that
    \[
    {\rm Tr}(F(e)^*F(v))={\rm Tr}(e^*v),\quad e,v\in \cU_{\min}(H).
    \]
    Then \(F\) is of one of the following forms:
    \begin{enumerate}[label=(\roman*)]
        \item there exist unitaries \(U,V\) on \(H\) such that
        \[F(e)=UeV,\quad e\in\cU_{\min}(H).\]
        \item there exist antiunitaries \(U,V\) on \(H\) such that
        \[F(e)=Ue^*V,\quad e\in\cU_{\min}(H).\]
    \end{enumerate}
\end{theorem}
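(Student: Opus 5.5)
Our plan is to linearize $F$ through the Hilbert--Schmidt inner product and then classify linear maps that preserve rank-one operators.

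For $e=\eta\otimes\xi$ and $v=\zeta\otimes\omega$ we have $e^*v=(\xi\otimes\eta)(\zeta\otimes\omega)=\langle\zeta,\eta\rangle\,\xi\otimes\omega$, so
\[
{\rm Tr}(e^*v)=\langle\zeta,\eta\rangle\langle\xi,\omega\rangle=\langle v,e\rangle_{C_2},
\]
the Hilbert--Schmidt inner product. Thus $\cU_{\min}(H)$ is exactly the set of norm-one rank-one elements of the Hilbert space $C_2(H)\cong H\otimes\overline H$. The hypothesis says that $F$ preserves this inner product on that set.

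\textbf{Step 1: linearization.} We define $L\bigl(\sum_i c_ie_i\bigr)=\sum_i c_iF(e_i)$ on the linear span of $\cU_{\min}(H)$, which is the finite-rank operators. Because inner products are preserved,
\[
\Bigl\|\sum_i c_iF(e_i)\Bigr\|_2=\Bigl\|\sum_i c_ie_i\Bigr\|_2 .
\]
Hence $L$ is well defined, complex linear and isometric. Since $F$ is onto $\cU_{\min}(H)$, the image of $L$ is dense. So $L$ extends to a unitary operator $W$ on $C_2(H)$. By construction $W$ maps the rank-one operators bijectively onto the rank-one operators, because it is positively and complex homogeneous and $F$ is bijective on norm-one rank-one elements.

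\textbf{Step 2: rank-one preservers.} Next we classify $W$ on finite-rank operators. The maximal linear subspaces consisting of operators of rank at most one come in two families: $R_\xi=\{\eta\otimes\xi:\eta\in H\}$ and $L_\eta=\{\eta\otimes\xi:\xi\in H\}$. Two distinct members of the same family meet only in $0$. A member of one family meets each member of the other family in a line.

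Since $W$ and $W^{-1}$ preserve rank-one operators, $W$ permutes these maximal subspaces. By the intersection pattern, $W$ either maps each family to itself or interchanges the two families. In the first case, $W(R_\xi)=R_{\phi(\xi)}$ and $W(L_\eta)=L_{\psi(\eta)}$ for bijections $\phi,\psi$ of the projective spaces. Restricting $W$ to a single $R_\xi$ gives a linear isometry $H\to H$. Comparing these restrictions for different $\xi$ along the lines $R_\xi\cap L_\eta$ shows that $W(\eta\otimes\xi)=A\eta\otimes B\xi$ with $A,B$ linear. A common scalar is split between $A$ and $B$.

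Isometry of $W$ gives $\|A\eta\|\,\|B\xi\|=1$ for all unit vectors $\eta,\xi$. This forces $A$ and $B$ to be scalar multiples of isometries. Bijectivity makes them unitaries after renormalizing. This yields $F(e)=UeV$ with $U=A$ and $V=B^*$.

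In the interchanging case we get $W(\eta\otimes\xi)=A\xi\otimes B\eta$. Here $A$ and $B$ must be conjugate linear so that $W$ is linear in $\eta\otimes\xi$. Since $e^*=\xi\otimes\eta$, this is exactly the form $F(e)=Ue^*V$ with antiunitaries $U,V$.

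\textbf{Main obstacle.} We expect the hardest step to be Step 2 when $\dim H=\infty$: showing that the induced maps on the families glue into genuinely linear or conjugate-linear operators $A,B$. Our first attempt is the direct argument: restricted to each $R_\xi$, $W$ is a linear isometry $T_\xi\colon H\to H$, and for $\xi,\xi'$ linearly independent, additivity of $W$ on $\eta\otimes(\xi+\xi')$ gives $T_{\xi+\xi'}=T_\xi+T_{\xi'}$ up to the scalars dictated by rank one. This should show that $T_\xi=c(\xi)A$ for a fixed $A$, with $c$ (conjugate) linear. Should this gluing fail, we would instead invoke the known description of bijective rank-one preserving linear maps on $F(H)$.

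Finally, $\dim H\le1$ is trivial, and when $\dim H\ge2$ the two families are distinct, so the dichotomy above is clean.
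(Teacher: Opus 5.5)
Your proof is correct, but it does not follow the paper, because the paper gives no proof of this statement. It is quoted from Moln\'ar's Theorem 2 (also recorded as Theorem 2.15 of Fern\'andez-Polo et al.) and used as a black box. Your route is a self-contained derivation.

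What makes it work is that the hypothesis preserves ${\rm Tr}(e^*v)=\langle v,e\rangle_{C_2}$ itself, not just its modulus as in Wigner's theorem, so no phase-fixing is needed. $F$ extends at once to a unitary $W$ of $C_2(H)\cong H\otimes\overline{H}$. Its restriction to the finite-rank operators is a linear bijection that preserves rank one in both directions. The statement then reduces to the classical description of bijective linear rank-one preservers, which you reprove using the two families $R_\xi$, $L_\eta$ of maximal rank-$\le 1$ subspaces.

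Surjectivity of $F$ enters exactly where it is needed. It makes $W^{-1}$ rank-one preserving, which rules out the degenerate preservers whose image lies inside a single $R_\xi$ or $L_\eta$, and it makes $A$ and $B$ onto. The citation buys brevity. Your argument buys independence from the literature and shows the result is really about unitaries of $H\otimes\overline H$ that map product vectors to product vectors.

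Two small points.

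\textbf{The containment lemma.} State the short reason why every linear space of operators of rank at most one lies in some $R_\xi$ or $L_\eta$. If $\eta_1\otimes\xi_1+\eta_2\otimes\xi_2$ has rank $\le 1$ and all four vectors are nonzero, then $\eta_1\parallel\eta_2$ or $\xi_1\parallel\xi_2$.

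\textbf{The gluing step.} The step you flag as the main obstacle is easier than your $T_\xi=c(\xi)A$ sketch suggests. It has no infinite-dimensional difficulty, so the fallback to the literature is unnecessary.
\begin{enumerate}
\item In the family-preserving case, fix unit vectors $\xi_0,\eta_0$ and let $W(R_{\xi_0})=R_{\xi_0'}$.
\item Define the linear isometry $A$ by $W(\eta\otimes\xi_0)=A\eta\otimes\xi_0'$.
\item Define the linear map $B$ by $W(\eta_0\otimes\xi)=A\eta_0\otimes B\xi$. This is legitimate because $W(L_{\eta_0})$ is the $L$-space through $A\eta_0$.
\item Then $W(R_\xi)$ is the $R$-space through $B\xi$ and $W(L_\eta)$ is the $L$-space through $A\eta$. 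Hence $W(\eta\otimes\xi)=c(\eta,\xi)\,A\eta\otimes B\xi$ for some scalar $c(\eta,\xi)$.
\item Linearity in $\eta$, together with the independence of $A\eta$ and $A\eta_0$ for independent $\eta,\eta_0$, gives $c(\eta,\xi)=c(\eta_0,\xi)=1$. Homogeneity handles the case $\eta\parallel\eta_0$.
\end{enumerate}
The interchanging case is identical, with conjugate-linear $A,B$.
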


Now, we are ready to present  the proof of Theorem \ref{main}.
\begin{proof}[Proof of Theorem \ref{main}]
    By Lemma \ref{orth-preserving}, we have \(\dim(H_1)=\dim(H_2)\). Hence, we can identify \(H_1\) with \(H_2\) and denote it by \(H\). 
    By Proposition \ref{U_min preserving}, we have \(V_0(\cU_{\min}(H))= \cU_{\min}(H)\) and 
    we first assume that
    \begin{align}\label{main lambda}
    V_0(\lambda e)=\lambda e, 
    \end{align}
    for all \(e\in \cU_{\min}(H)\) and \(\lambda\in \mathbb T\). The other case will be proved later.
    Then we have 
    \[
    \left\|e-\lambda f\right\|_p=\left\|V_0(e)-V_0(\lambda f)\right\|_p\stackrel{\eqref{main lambda}}{=}\left\|V_0(e)-\lambda V_0(f)\right\|_p,
    \]
    for arbitrary \(e,f\in \cU_{\min}(H)\). 
    By Lemma \ref{lem:trace-recovery}, we have
    \[
    {\rm Tr}(e^*f)={\rm Tr}(V_0(e)^*V_0(f)).
    \]
    Thus \(V_0:\cU_{\min}(H)\to \cU_{\min}(H)\) satisfies the property in Theorem \ref{wigner}. Hence, \(V_0:\cU_{\min}(H)\to \cU_{\min}(H)\) is of one of the following forms:
    \begin{enumerate}[label=(\roman*)]
        \item there exist unitaries \(U,V\) on \(H\) such that
        \[V_0(e)=UeV,\quad e\in\cU_{\min}(H).\]
        \item there exist antiunitaries \(U,V\) on \(H\) such that
        \[V_0(e)=Ue^*V,\quad e\in\cU_{\min}(H).\]
    \end{enumerate}
    Assume that form (i) holds; form (ii) can be proved by a similar argument.
    Define a surjective isometry \(\widetilde{V_0}:S(C_p(H))\to S(C_p(H))\) by
    \[\widetilde{V_0}(x)=U^*V_0(x)V^*\]
    for \(x\in S(C_p(H))\). Then we have
    \[
    \widetilde{V_0}(e)=e
    \]
    for all \(e\in\cU_{\min}(H)\). Hence,
    \begin{align}\label{theorem=}
    \|x-e\|_p^p=\|\widetilde{V_0}(x)-\widetilde{V_0}(e)\|_p^p=\|\widetilde{V_0}(x)-e\|_p^p
    \end{align}
    for arbitrary \(x\in S(C_p(H))\) and \(e\in \cU_{\min}(H)\). In particular, \eqref{theorem=} holds for all finite-rank operators \(x\in S(C_p(H))\). 
    As in Step 2 of Proposition \ref{rank-one partial isometry decide}, \(\widetilde{V_0}(x)\) is also a finite-rank operator.
    Hence, by Proposition \ref{rank-one partial isometry decide},
    \[\widetilde{V_0}(x)=x\]
    for all finite-rank operators \(x\in S(C_p(H))\). Since the set of norm $1$ finite-rank operators is dense in \(S(C_p(H))\) and \(\widetilde{V_0}\) is continuous, \(\widetilde{V_0}(x)=x\) for all \(x\in S(C_p(H))\). Therefore, \(V_0(x)=UxV\) for all \(x\in S(C_p(H))\).
    In particular, $V_0$ can be extended to a surjective complex-linear isometry on $C_p(H)$. 

    If
    \[
    V_0(\lambda e)=\bar{\lambda} e, 
    \]
    for all \(e\in \cU_{\min}(H)\) and \(\lambda\in \mathbb T\), then we consider the surjective isometry \(\overline{V_0}:S(C_p(H))\to S(C_p(H))\) defined by
    \[\overline{V_0}(x)=\overline {V_0(x)}.\]
    The result then follows by the same argument as above.
\end{proof}


\section{Remarks on isometry extension from the positive sphere of $C_p$}\label{s:p}

Nagy studied   Tingley's problem for \(S(C_p(H)^+)\) in \cite{Nagy} with  \(p>1\).
Note that the case when $p=2$ is covered in \cite{Nagy}. 
In this section, we extend his result to the setting  of \(p>0\).
\begin{lemma}\label{tr-preserving for Proj}
    Let \(e,f,e',f'\in Proj_1(H)\), and let \(p>0\). If 
    \begin{align}\label{e-f,proj}
        \left\|e-f\right\|_p^p=\left\|e'-f'\right\|_p^p,
    \end{align}
    then
    \[{\rm Tr}(ef)={\rm Tr}(e'f').\]
\end{lemma}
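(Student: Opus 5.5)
For two rank-one projections $e=\xi\otimes\xi$ and $f=\zeta\otimes\zeta$ I will compute the singular values of $e-f$ explicitly. The plan is to show that $\|e-f\|_p^p$ is a strictly monotone function of ${\rm Tr}(ef)=|\langle \xi,\zeta\rangle|^2\in[0,1]$ for every $p>0$. The conclusion then follows at once, since equal values of $\|e-f\|_p^p$ and $\|e'-f'\|_p^p$ force equal arguments.

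\textbf{Step 1: spectrum of $e-f$.} The operator $e-f$ is self-adjoint and lives on the (at most) two-dimensional space spanned by $\xi$ and $\zeta$, and it vanishes on the orthogonal complement. Its trace is ${\rm Tr}(e-f)=0$, so its two possibly nonzero eigenvalues are $\pm s$ for some $s\ge0$. To find $s$, I use
\[
{\rm Tr}\bigl((e-f)^2\bigr)={\rm Tr}(e)+{\rm Tr}(f)-2{\rm Tr}(ef)=2-2t,\qquad t:={\rm Tr}(ef)=|\langle\xi,\zeta\rangle|^2 .
\]
This gives $2s^2=2-2t$, that is, $s=\sqrt{1-t}$. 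Alternatively, the determinant computation in the basis $\{\xi,\xi^\perp\}$, as in Step 1 of the proof of Lemma~\ref{lem:trace-recovery}, yields the same value. The degenerate case $\zeta\parallel\xi$ is consistent: there $t=1$, $s=0$ and $e=f$.

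\textbf{Step 2: conclude.} From Step 1,
\[
\|e-f\|_p^p = 2\,(1-{\rm Tr}(ef))^{p/2}.
\]
The map $t\mapsto 2(1-t)^{p/2}$ is strictly decreasing on $[0,1]$ for every $p>0$, hence injective. Applying the same formula to $e',f'$, the hypothesis \eqref{e-f,proj} gives
\[
2\,(1-{\rm Tr}(ef))^{p/2}=2\,(1-{\rm Tr}(e'f'))^{p/2},
\]
and injectivity yields ${\rm Tr}(ef)={\rm Tr}(e'f')$.

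\textbf{Main obstacle.} The only point needing care is justifying that $e-f$ has rank at most two with eigenvalues $\pm\sqrt{1-t}$. This holds because $e-f$ is a self-adjoint, trace-zero operator supported on ${\rm span}\{\xi,\zeta\}$. Once that is established, the rest is elementary, and unlike Lemma~\ref{lem:trace-recovery} no exclusion of $p=2$ is needed.
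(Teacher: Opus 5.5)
Your proof is correct and follows essentially the same route as the paper's. Both arguments use ${\rm Tr}(e-f)=0$ to get eigenvalues $\pm s$, compute ${\rm Tr}((e-f)^2)=2-2{\rm Tr}(ef)$ to obtain $\|e-f\|_p^p=2(1-{\rm Tr}(ef))^{p/2}$, and conclude from the strict monotonicity of $t\mapsto(1-t)^{p/2}$ on $[0,1]$; your explicit handling of the degenerate case $e=f$ is a small improvement on the paper's phrasing, which tacitly assumes two nonzero eigenvalues.
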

\begin{proof}
    Let \(h=e-f\). Since \(e,f\in Proj_1(H)\), it follows that \(h\) is a self-adjoint operator with rank at most 2. Hence, \(h\) has at most two nonzero eigenvalues. Let \(\lambda_1, \lambda_2\) be the nonzero eigenvalues of \(h\). By the additivity of \({\rm Tr}\), we have
    \[\lambda_1+\lambda_2={\rm Tr}(h)=0,\]
    which implies \(\lambda_1=-\lambda_2\). Hence,
\begin{align}\label{Tr=0}
    \left\|h\right\|_p^p&=\left| \lambda_1 \right|^p + \left| \lambda_2 \right|^p = 2\left| \lambda_1 \right|^p.
\end{align}
Since 
\begin{align}\label{Tr1}
    {\rm Tr}(h^2)&={\rm Tr}((e-f)^2)={\rm Tr}(e)+{\rm Tr}(f)-{\rm Tr}(ef)-{\rm Tr}(fe)
    \\&={\rm Tr}(e)+{\rm Tr}(f)-2{\rm Tr}(ef)=2-2{\rm Tr}(ef),\nonumber
\end{align}
and 
\begin{align}\label{Tr2}
    {\rm Tr}(h^2)=\lambda_1^2+\lambda_2^2=2\lambda_1^2,
\end{align} 
it follows that 
 \[\left\|e-f\right\|_p^p\stackrel{\eqref{Tr=0}}{=} 2\lambda_1^2\stackrel{\eqref{Tr2}}{=}2(1-{\rm Tr}(ef))^{\frac{p}{2}}.\]
 Similarly,
 \[\left\|e'-f'\right\|_p^p=2(1-{\rm Tr}(e'f'))^{\frac{p}{2}}.\]
 Since the function \(t\mapsto (1-t)^{p/2}\) is strictly decreasing on \([0,1]\) and \({\rm Tr}(ef),{\rm Tr}(e'f')\in [0,1]\), it follows from  \eqref{e-f,proj} that 
 \[{\rm Tr}(ef)={\rm Tr}(e'f').\]
 The proof is complete. 
\end{proof}

\begin{proposition}
    
\label{positive sphere}
    Let \(H_1,H_2\) be complex Hilbert spaces, and let \(V_0: S(C_p(H_1)^+) \to S(C_p(H_2)^+)\) be a surjective isometry, where \(0<p<\infty\). Then there exists a surjective real-linear isometry that extends \(V_0\). 
\end{proposition}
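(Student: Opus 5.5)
We follow the same scheme as the proof of Theorem \ref{main}, with rank-one projections in place of minimal partial isometries.

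\textbf{Step 1: $V_0$ maps rank-one projections onto rank-one projections.} For positive elements $x,y$ of the sphere, orthogonality means $xy=0$. By Lemma \ref{orth for p<1} for $p\le 1$, and by the equality case of the Clarkson--McCarthy inequalities for $p>1$ (with $p\ne 2$), this is equivalent to $\|x-y\|_p^p=\|x\|_p^p+\|y\|_p^p=2$. For $p=2$ and positive $x,y$ we have $\|x-y\|_2^2=2-2{\rm Tr}(xy)$, and ${\rm Tr}(xy)=0$ if and only if $xy=0$. So in every case $V_0$ and $V_0^{-1}$ preserve orthogonality. We then characterize $Proj_1(H)$ metrically inside $S(C_p(H)^+)$: $x$ is a rank-one projection if and only if $x$ is ``maximal'', meaning the set $\{x\}^\perp\cap S(C_p(H)^+)$ is maximal under inclusion. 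Indeed, any $x$ of rank at least $2$ has $\{x\}^\perp\subsetneq\{e\}^\perp$ for any rank-one $e\le r(x)$. Hence $V_0(Proj_1(H_1))=Proj_1(H_2)$. Counting maximal orthogonal families of rank-one projections, as in Lemma \ref{orth-preserving}, gives $\dim H_1=\dim H_2=:\dim H$.

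\textbf{Step 2: reduce to $V_0=\mathrm{id}$ on $Proj_1(H)$.} Since $V_0$ is an isometry, Lemma \ref{tr-preserving for Proj} gives ${\rm Tr}(V_0(e)V_0(f))={\rm Tr}(ef)$ for all $e,f\in Proj_1(H)$. So $V_0$ restricted to $Proj_1(H)$ is a bijective transition-probability preserving map. By Wigner's theorem (Uhlhorn/Molnár form, the analogue of Theorem \ref{wigner}), there is a unitary or antiunitary $U$ with $V_0(e)=UeU^*$. Replacing $V_0$ by $U^*V_0(\cdot)U$, we may assume $V_0(e)=e$ for every $e\in Proj_1(H)$. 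It then follows that $\|x-e\|_p^p=\|V_0(x)-e\|_p^p$ for all $x\in S(C_p(H)^+)$ and $e\in Proj_1(H)$.

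\textbf{Step 3: identification.} First take $x$ of finite rank. We show $V_0(x)$ is supported on $r(x)$. If $e\perp x$, then $\|V_0(x)-e\|_p^p=2$. For $p\le 1$ this forces $V_0(x)\perp e$ by the orthogonality criterion. For $p\ge 1$ we use instead that $V_0$ preserves orthogonality and $V_0(e)=e$. So $V_0(x)$ lives in $H_0=r(x)H$, which is finite-dimensional, and by symmetry applied to $V_0^{-1}$ the supports coincide. Proposition \ref{rank-one decide+} with $\gamma=1$ on $H_0$ then gives $V_0(x)=x$. Finite-rank positive elements are dense in the positive sphere and $V_0$ is continuous, so $V_0(x)=x$ everywhere. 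Hence the original map is $x\mapsto UxU^*$, which extends to a surjective real-linear isometry; it is complex-linear or conjugate-linear according to the type of $U$.

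The main obstacle I expect is Step 1, specifically the metric characterization of orthogonality for $p>1$ restricted to positive elements. If the Clarkson equality case is awkward there, I would fall back on $\|x-y\|_p^p\le\|x\|_p^p+\|y\|_p^p$ for positive $x,y$ when $p\ge1$, and on the determination of equality via majorization of the eigenvalues of $x-y$ against those of $x\oplus(-y)$.
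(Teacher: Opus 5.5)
Your proposal is correct and follows the paper's route: orthogonality preservation, then $V_0(Proj_1(H))=Proj_1(H)$, then Lemma~\ref{tr-preserving for Proj} with Wigner's theorem to normalize $V_0$ to the identity on $Proj_1(H)$, then Proposition~\ref{rank-one decide+} with $\gamma=1$ on finite-rank elements, and finally density. Your characterization of rank-one projections as the elements with maximal orthogonal complement is a clean, self-contained replacement for the paper's appeal to the argument of Proposition~\ref{U_min preserving}.

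The only soft spot is the orthogonality criterion in Step 1. The Clarkson--McCarthy equality case involves $\|x+y\|_p^p+\|x-y\|_p^p$, so it does not settle $p>2$ from $\|x-y\|_p^p=2$ alone, and Lemma~\ref{orth for p<1} as stated does not cover $p=1$. Your fallback does work: for positive $x,y$, Step~1 of the proof of Lemma~\ref{orth for p<1}, applied to $x$ and $-y$ (where $x_-=0=(-y)_+$), goes through verbatim for every $p>0$.
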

\begin{proof}
    We prove the statement in  steps similar to those in the proof of Theorem~\ref{main}.
    
    {\bf{Step 1}} We first prove that \(V_0\) preserves orthogonality.

    For \(0<p<1\), since Lemma \ref{orth for p<1} holds for \(C_p(H)^+\) with \(0< p< 1\), the proof is similar to the proof of Lemma \ref{orth-preserving}.

    For \(p=1\), this was  proved in \cite[Lemma 2.2]{Polo2}.
    
    For \(p>1\), this was proved in \cite[p.~7]{Nagy}.

  By the same argument as in the proof of Lemma \ref{orth-preserving}, 
    we have \(\dim(H_1)=\dim(H_2)\). Hence, we can identify \(H_1\) with \(H_2\) and denote it by \(H\). 

    {\bf{Step 2}}
    By an argument similar to the 
  proof of Proposition \ref{U_min preserving} for \(Proj_1(H)\) instead of \(\cU_{\min}(H)\), we have 
    \[
    V_0(Proj_1(H))=Proj_1(H).
    \]
    Arguing similarly to the argument in Theorem \ref{main}, by replacing   Lemma \ref{lem:trace-recovery} with  Lemma \ref{tr-preserving for Proj}, we have
    \[
    {\rm Tr}(ef)={\rm Tr}(V_0(e)V_0(f)),
    \]
    for all \(e,f \in Proj_1(H)\). Hence, by \cite[Theorem 3.1]{Cassinelli}, there is either a unitary or an antiunitary operator \(U\) on \(H\) such that 
    \begin{align}\label{positive V_0}
    V_0(e)=UeU^*,\quad e\in Proj_1(H).
    \end{align}
    
    {\bf{Step 3}}
    We then prove that \eqref{positive V_0} holds for arbitrary \(x\in S(C_p(H)^+)\).
    Since Lemma \ref{rank-one decide+} holds for \(S(C_p(H)^+)\), it follows from the  same argument as in Theorem \ref{main}  that 
    \[V_0(x)=UxU^*,\quad x\in S(C_p(H)^+).\]
\end{proof}


   


\appendix
\section{Orthogonality of operators in noncommutative $L_p$-spaces}

Let $\cM$ be a von Neumann algebra on a  Hilbert space $\cH$, equipped with a semi-finite  faithful normal trace $\tau$.
A closed and densely defined operator $x$, affiliated with $\mathcal{M}$, is called \emph{$\tau$-measurable} if
$\tau(e ^{|x|}(s,\infty))<\infty$ for sufficiently large $s$, where $e ^{|x|}$ denotes  the spectral measure of $|x|$\cite{LSZ,DPS}.
For every $\tau$-measurable operator $x$ the {\it singular value function} $\mu(x)$ is defined by setting \cite{DPS,LSZ} 
$$\mu_t(x)=\inf\{s\geq0:\ \tau(e ^{|x|}(s,\infty))\leq t \},\quad t>0.$$
For \(0<p<1\), \(L_p(\cM,\tau)\) is defined as the set of all $\tau$-measurable operators \(x\) such that
\[
\|x\|_p
=
\left(
\int_0^\infty \mu_t(x) \,dt
\right)^{1/p}
<
\infty,
\]
(see \cite[Corollary 2.8]{Fack}).

Below, we  extend  \cite[Proposition A.2]{RX}
   to arbitrary operators in noncommutative \(L_p(\cM,\tau)\), $0<p<1$. 
The particular case of the following lemma for the Schatten $p$-classes, $p<1$, was claimed in \cite[Theorem 2.8]{McCarthy}. 

Our argument is motivated by  the   reduction scheme used by Bourin in the proof
of \cite[Theorem 2.1]{Bourin}: one first proves the assertion on the
positive cone, then extends it to self-adjoint operators via the decomposition
\(x=x_+-x_-\), and finally treats arbitrary operators through the standard
\(2\times2\) self-adjoint dilation.
\begin{lemma}\label{orth for p<1}
Let $\cM$ be a semifinite von Neumann algebra equipped with a faithful normal semifinite trace \(\tau\), and let $0<p<1$. For $x,y\in L_p(\mathcal M,\tau)$, 
 \[x\perp y\]
 if and only if 
 \[\left\|x+y\right\|_p^p=\|x\|_p^p+\|y\|_p^p.\]
\end{lemma}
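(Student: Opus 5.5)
The plan follows the reduction scheme indicated just before the statement: first treat positive operators, then self-adjoint ones, and finally arbitrary operators via a $2\times 2$ dilation. Throughout we use $\mu(\cdot)$ and the identity $\|x\|_p^p=\int_0^\infty \mu_t(x)^p\,dt$.

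\textbf{Easy direction.} Suppose $x\perp y$, i.e.\ $xy^*=0$ and $x^*y=0$. Then the left supports $\ell(x),\ell(y)$ are orthogonal, and so are the right supports $r(x),r(y)$. Hence $|x+y|^2=x^*x+y^*y$, where $x^*x$ and $y^*y$ have orthogonal supports $r(x),r(y)$. Therefore $|x+y|=|x|+|y|$ is a sum of positive operators with orthogonal supports. Spectral calculus then gives $\tau(|x+y|^p)=\tau(|x|^p)+\tau(|y|^p)$.

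\textbf{Positive case of the converse.} Let $a,b\ge0$ in $L_p$ with $0<p<1$. I would use the known concavity inequality $\tau((a+b)^p)\le \tau(a^p)+\tau(b^p)$, which is the McCarthy/Rotfel'd trace inequality for the concave function $t^p$. I would also use its equality characterisation, and this is the key step. A route I would try: write $(a+b)^p=(a+b)^{p-1}a+(a+b)^{p-1}b$ in the trace sense, after approximating by $a+b+\varepsilon$ to handle negative powers. Then show
\[
\tau\bigl((a+b)^{p-1}a\bigr)=\tau\bigl(a^{1/2}(a+b)^{p-1}a^{1/2}\bigr)\le \tau\bigl(a^{1/2}a^{p-1}a^{1/2}\bigr)=\tau(a^p).
\]
This uses operator monotonicity of $t\mapsto t^{p-1}$ in reversed form, since $t^{p-1}$ is operator decreasing for $-1\le p-1<0$, so $(a+b+\varepsilon)^{p-1}\le (a+\varepsilon)^{p-1}$. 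If equality holds, both inequalities are equalities. Strict operator-decreasing behaviour, via the integral representation of $t^{p-1}$, then forces $a^{1/2}\bigl((a+\varepsilon)^{p-1}-(a+b+\varepsilon)^{p-1}\bigr)a^{1/2}\to0$ in trace. From this I would extract $ab=0$. For example, the integral representation gives a term of the form $a^{1/2}(s+a)^{-1}b(s+a+b)^{-1}\cdots$ whose vanishing yields $a^{1/2}b^{1/2}=0$. I expect this equality analysis to be the main obstacle, especially the passage $\varepsilon\downarrow0$ in the semifinite, unbounded setting. A fallback is to cut down by spectral projections $e^{a}(\delta,\infty)$ and $e^{b}(\delta,\infty)$ to reduce to bounded operators of finite-trace support.

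\textbf{Self-adjoint and general cases.} For self-adjoint $x,y$, I would use the triangle-type inequality $\|x+y\|_p^p\le\|x\|_p^p+\|y\|_p^p$ (the $p$-norm version of Rotfel'd's inequality, available for $0<p<1$) together with the decompositions $x=x_+-x_-$ and $y=y_+-y_-$. This gives
\[
\|x+y\|_p^p\le \|x_++y_+\|_p^p+\|x_-+y_-\|_p^p\le \|x\|_p^p+\|y\|_p^p .
\]
The first inequality comes from $x+y=(x_++y_+)-(x_-+y_-)$ and the quasi-triangle inequality. The second comes from the positive-cone inequality, using $\|x\|_p^p=\|x_+\|_p^p+\|x_-\|_p^p$. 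Equality then forces equality in the positive case for the pairs $(x_+,y_+)$ and $(x_-,y_-)$, so $x_+y_+=0$ and $x_-y_-=0$. It also forces $(x_++y_+)\perp(x_-+y_-)$ by the positive-to-self-adjoint step. Together these give $x_\pm\perp y_\mp$, hence $xy=0$.

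For arbitrary $x,y$, I would pass to $\cM\otimes M_2$ with trace $\tau\otimes\mathrm{Tr}$ and set
\[
\hat x=\begin{pmatrix}0&x\\ x^*&0\end{pmatrix},\qquad \hat y=\begin{pmatrix}0&y\\ y^*&0\end{pmatrix}.
\]
These are self-adjoint with $\|\hat x\|_p^p=2\|x\|_p^p$, and similarly $\|\widehat{x+y}\|_p^p=2\|x+y\|_p^p$. The hypothesis transfers, so the self-adjoint case gives $\hat x\hat y=0$. Reading off the entries of $\hat x\hat y$ yields $xy^*=0$ and $x^*y=0$.
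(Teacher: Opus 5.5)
There is a genuine gap in the self-adjoint step. Your reduction scheme, the easy direction and the $2\times 2$ dilation step are fine and match the paper. The problem is the sentence claiming that equality ``also forces $(x_++y_+)\perp(x_-+y_-)$ by the positive-to-self-adjoint step''. Nothing in your proposal establishes such a step. Write $A=x_++y_+\ge0$ and $B=x_-+y_-\ge0$. Equality in your first inequality only gives $\|A-B\|_p^p=\|A\|_p^p+\|B\|_p^p$. That is the equality case of the $p$-triangle inequality for the self-adjoint pair $(A,-B)$, so it is an instance of the very lemma you are proving. It is not covered by the positive-cone characterization, which concerns $\|A+B\|_p^p$. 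Appealing to the equality case of the general quasi-triangle inequality here is therefore circular. Since $x_+\perp y_-$ and $x_-\perp y_+$ come only from this step, the self-adjoint case, and hence the whole converse, is not proved as written.

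The paper's Step 1 handles exactly this point, using compression instead of the quasi-triangle inequality. Let $h=x+y$ and let $e,f$ be the support projections of $h_+,h_-$. Then $ef=0$, $h_+=ehe=eAe-eBe\le eAe$ and $h_-=-fhf\le fBf$, so
\[
\|x+y\|_p^p=\|h_+\|_p^p+\|h_-\|_p^p\le\|eAe\|_p^p+\|fBf\|_p^p\le\|A\|_p^p+\|B\|_p^p\le\|x\|_p^p+\|y\|_p^p .
\]
Equality throughout gives your positive-case equalities, and also $\|eAe\|_p=\|A\|_p$. Since $\mu(eAe)=\mu(A^{1/2}eA^{1/2})$ and $0\le A^{1/2}eA^{1/2}\le A$, operator monotonicity of $t\mapsto t^p$ and faithfulness of $\tau$ force $A^{1/2}eA^{1/2}=A$. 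Hence the support of $A$ lies under $e$. Likewise the support of $B$ lies under $f$, so $A\perp B$. The paper invokes \cite[Proposition 2.2]{Chilin} for this strictness.

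Separately, your equality analysis in the positive case is only sketched: the $\varepsilon\downarrow0$ passage and the extraction of $ab=0$ are not carried out. That statement is precisely \cite[Proposition A.2]{RX}, which the paper cites rather than reproves, so you can do the same.
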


\begin{proof}
For \(x,y \in L_p(\cM,\tau)\) with \(x\perp y\),
by \cite[Fact 1.3(i)]{RX}, we have that
\[
\left\|x+y\right\|_p^p=\|x\|_p^p+\|y\|_p^p.
\]
It remains to prove the sufficiency.

{\bf{Step 1}} We first prove the sufficiency for self-adjoint elements. Let \(x,y\in L_p(\cM,\tau)\) satisfy 
\[x=x^*,\quad y=y^*,\]
and
\begin{align}\label{x+y p}
    \left\|x+y\right\|_p^p=\|x\|_p^p+\|y\|_p^p.
\end{align}
Write
\begin{align}\label{x=x*}
 x=x_+-x_-,\quad y=y_+-y_-,\quad h=x+y.
\end{align}
Let
\[
 e=s(h_+),\qquad f=s(h_-)
\]
be the support projections of the positive and negative parts of \(h\).
Then, we have  \(e\perp f\), \(h_+=ehe\), and \(h_-=-fhf\). Hence, 
\begin{align}\label{h_+}
h_+= ex_+e+ey_+e -ex_-e-ey_-e
 \le ex_+e+ey_+e. 
\end{align}
Since the function \(t\mapsto t^p\) is concave on \([0,\infty)\), it follows from \cite[Proposition 4.6(i)]{Fack} that 
\begin{align}\label{h_+ p}
    \|h_+\|_p^p\stackrel{\eqref{h_+}}{\le} \|ex_+e+ey_+e\|_p^p\le
    \|x_++y_+\|_p^p
    \le \|x_+\|_p^p+\|y_+\|_p^p.
\end{align}
Similarly, we obtain that
\begin{align}\label{h_- p}
    \|h_-\|_p^p\le \|fx_-f+fy_-f\|_p^p\le \|x_-+y_-\|_p^p \le \|x_-\|_p^p+\|y_-\|_p^p.
\end{align}
Note that 
\[
\|x+y\|_p^p\stackrel{\eqref{x=x*}}{=}\|h\|_p^p=\|h_+\|_p^p+\|h_-\|_p^p.
\]
Combining \eqref{x+y p}, \eqref{h_+ p}, and \eqref{h_- p}, we have
\begin{align*}
\|x+y\|_p^p= \|h_+\|_p^p +\|h_-\|_p^p &\le \|x_++y_+\|_p^p+\|x_-+y_-\|_p^p\nonumber \\
&\le  \|x_+\|_p^p+\|y_+\|_p^p+\|x_-\|_p^p+\|y_-\|_p^p
\\
&=\norm{x}_p^p +\norm{y}_p^p =\|x+y\|_p^p \nonumber.
\end{align*}
Therefore, we have 
\begin{align*} 
    \|h_+\|_p^p= \|ex_+e+ey_+e\|_p^p=
    \|x_++y_+\|_p^p
  = \|x_+\|_p^p+\|y_+\|_p^p
\end{align*}
and 
\begin{align*} 
    \|h_-\|_p^p=\|fx_-f+fy_-f\|_p^p=  \|x_-+y_-\|_p^p= \|x_-\|_p^p+\|y_-\|_p^p.
\end{align*}
By \cite[Proposition A.2]{RX}, we have 
\begin{align}\label{x_+ perp y_+}
x_+\perp y_+,\quad x_-\perp y_-.
\end{align}
We claim that $e\ge s(x_++y_+ )$. 
Otherwise, we have $(x_++y_+)^{1/2}(I -e)(x_++y_+)^{1/2}>0$. It follows from \cite[Proposition 2.2]{Chilin} that
\begin{align*}
\mu(x_+ +y_+)&= \mu((x_++y_+)^{1/2}(I-e+e)(x_++y_+)^{1/2}) \\& >\mu((x_++y_+)^{1/2}e(x_++y_+)^{1/2})
=\mu(e(x_++y_+)e).   
\end{align*}
Therefore, $\norm{x_++y_+}_p> \norm{e(x_++y_+)e}_p$, a contradiction. 
The same argument yields that $f\ge s(x_-+y_-)$. 
In other words, 
\begin{align}\label{e(x_++y_+)e = x_++y_+}
e(x_++y_+)e = x_++y_+,\quad f(x_-+y_-)f = x_-+y_-. 
\end{align}
By  \eqref{e(x_++y_+)e = x_++y_+}, we have
\[
x_+=ex_+e,\quad y_+=ey_+e,\quad x_-=fx_-f,\quad y_-=fy_-f,
\]
which together with $ef=0$ implies that
\begin{align}\label{x_+ perp y_-}
x_+\perp y_-,\quad x_-\perp y_+.
\end{align}
Combining \eqref{x_+ perp y_+} and \eqref{x_+ perp y_-}, we have that
\[
x\perp y.
\]
This proves the self-adjoint case.

{\bf{Step 2}} We now prove the sufficiency for arbitrary \(x,y\in L_p(\cM,\tau)\) satisfying
\begin{align}\label{x+y p arbitrary}
    \left\|x+y\right\|_p^p=\|x\|_p^p+\|y\|_p^p.
\end{align}
In \(L_p(M_2(\cM,\tau))\), set
\[
 X=
 \begin{pmatrix}
 0&x\\
 x^*&0
 \end{pmatrix},
 \qquad
 Y=
 \begin{pmatrix}
 0&y\\
 y^*&0
 \end{pmatrix}.
\]
Then,  \(X=X^*\) and \(Y=Y^*\). Moreover,
\[
 \|X\|_p^p=2\|x\|_p^p,\qquad
 \|Y\|_p^p=2\|y\|_p^p,
\]
and then 
\[
 \|X+Y\|_p^p=2\|x+y\|_p^p\stackrel{\eqref{x+y p arbitrary}}{=}2\|x\|_p^p+2\|y\|_p^p=\|X\|_p^p+\|Y\|_p^p.
\]
By Step 1, we have that
\[
 X\perp Y,
\]
which implies that
\[
XY=0.
\]
Since
\[
 XY=
 \begin{pmatrix}
 xy^*&0\\
 0&x^*y
 \end{pmatrix},
\]
we have that
\[
 xy^*=0,\qquad x^*y=0,
\]
which implies that
\[
x\perp y.
\]
This completes the proof.  
\end{proof}

\bibliographystyle{amsalpha}

\end{document}